\documentclass[11pt,reqno]{amsart}

\usepackage{a4wide}
\usepackage{dsfont} 
\usepackage{amssymb,amsmath}
\usepackage{mathrsfs}
\usepackage{comment}
\usepackage[all]{xy}

\textheight=620pt

\newtheorem{proposition}{Proposition}[section]
  \newtheorem{theorem}[proposition]{Theorem}
  \newtheorem{corollary}[proposition]{Corollary}
  \newtheorem{lemma}[proposition]{Lemma}
\theoremstyle{remark}
  \newtheorem{definition}[proposition]{Definition}

\newcommand{\cst}{\ifmmode\mathrm{C}^*\else{$\mathrm{C}^*$}\fi}
\newcommand{\st}{\;\vline\;}
\newcommand{\CC}{\mathbb{C}}
\newcommand{\NN}{\mathbb{N}}
\newcommand{\tens}{\otimes}
\newcommand{\atens}{\otimes_{\text{\tiny{alg}}}} 
\newcommand{\id}{\mathrm{id}}
\newcommand{\Pol}{\mathrm{Pol}}
\newcommand{\comp}{\!\circ\!}
\newcommand{\I}{\mathds{1}}
\newcommand{\vt}{\!\vartriangle\!}
\newcommand{\hh}[1]{\widehat{#1}}
\newcommand{\GG}{\mathbb{G}}
\newcommand{\QG}{\mathbb{G}}
\newcommand{\HH}{\mathbb{H}}
\newcommand{\KK}{\mathbb{K}}
\newcommand{\balpha}{\boldsymbol{\alpha}}
\newcommand{\cA}{\mathscr{A}}
\newcommand{\cB}{\mathscr{B}}
\newcommand{\cF}{\mathscr{F}}
\newcommand{\sB}{\mathsf{B}}
\newcommand{\sC}{\mathsf{C}}
\newcommand{\sS}{\mathsf{S}}
\newcommand{\bbeta}{\boldsymbol{\beta}}
\newcommand{\opp}{\text{\tiny{op}}}
\renewcommand{\Bar}[1]{\overline{#1}}

\DeclareMathOperator{\C}{C}
\DeclareMathOperator{\Fun}{Fun}
\DeclareMathOperator{\ord}{ord}
\DeclareMathOperator{\QAUT}{QAUT}
\DeclareMathOperator{\qAut}{qAut}

\numberwithin{equation}{section}

\flushbottom

\begin{document}

\author{Jyotishman Bhowmick}
\address{Stat-Math Unit, Indian Statistical Institute, 203, B. T. Road, Kolkata 700 208} \email{jyotishmanb@gmail.com}

\author{Adam Skalski}
\address{Institute of Mathematics of the Polish Academy of Sciences,
ul.~\'Sniadeckich 8, 00--956 Warszawa, Poland
\newline \indent Faculty of Mathematics, Informatics and Mechanics, University of Warsaw, ul.~Banacha 2,
02-097 Warsaw, Poland}
\email{a.skalski@impan.pl}

\author{Piotr M.~So{\l}tan} \address{Department of Mathematical Methods in Physics, Faculty of Physics, University of Warsaw, Poland}
\email{piotr.soltan@fuw.edu.pl}

\thanks{AS and PS were partially supported by National Science Centre (NCN) grant no.~2011/01/B/ST1/05011.   JB wishes to thank Sergey Neshveyev and the Department of Mathematics, University of Oslo, where he was a post-doctoral fellow when this work started}

\title[Quantum automorphisms of finite quantum groups]{Quantum group of automorphisms of a finite quantum group}

\keywords{Quantum automorphism groups; Fourier transform}
\subjclass[2010]{Primary  16T30}

\begin{abstract}
A notion of a quantum automorphism group of a finite quantum group, generalising that of a classical automorphism group of a finite group, is proposed and a corresponding existence result proved.
\end{abstract}

\maketitle

The story of quantum symmetry groups began with the paper \cite{Wang}, where S.\,Wang defined and began to study quantum symmetry groups of finite-dimensional \cst-algebras. Soon after that T.\,Banica, J.\,Bichon and others expanded this study to quantum symmetry groups of various finite structures, such as (coloured) graphs or metric spaces (for the information on infinite-dimensional extensions, among them the quantum isometry groups of D.\,Goswami, we refer to \cite{orth} and references therein). The general idea behind these concepts is based on considering all compact quantum group actions on a given finite quantum space -- viewed dually as a finite-dimensional \cst-algebra -- preserving some extra structure of that space, and looking for a final object in the resulting category.

In this short note we propose studying in this spirit the quantum group of all quantum automorphisms of a given finite quantum or classical group. The starting point of our approach is based on an observation saying that a transformation of a finite abelian group $\Gamma$ is an automorphism if and only if it induces in a natural way, via the Fourier transform, a transformation of $\hh{\Gamma}$, the Pontriagin dual of $\Gamma$. In particular the automorphism groups of $\Gamma$ and $\hh{\Gamma}$ are canonically isomorphic.
We thus define a quantum family of maps on a finite quantum group $\GG$ to be a \emph{quantum family of automorphisms} if it induces, via the Fourier transform associated to $\GG$, a quantum family of maps on the dual quantum group $\hh{\GG}$. We show that  a universal quantum family of automorphisms of $\GG$ exists, and naturally defines the \emph{quantum automorphism group of $\GG$}, which is a compact quantum group in the sense of Woronowicz. Its classical version is the group of all automorphisms of $\GG$, i.e.\ these automorphisms of the \cst-algebra $\C(\GG)$ which commute in a natural sense with the coproduct of the latter algebra.

It has to be observed that we do not know any example in which the quantum automorphism group as defined in this note is not a classical group. Thus the notion, though apparently natural and satisfactory, needs to be treated as tentative and open to modifications. In particular the main question of interest, i.e.\ the problem which classical finite groups admit genuinely quantum automorphisms, remains open.

The plan of the article is as follows: in Section 1 we recall the properties of the duality and Fourier transform on finite quantum groups, as defined and studied for example by A.\,Van Daele (\cite{VDduality}, \cite{VDFourier}). In Section 2 we introduce the notion of a quantum family of automorphisms of a given finite quantum group $\GG$, discuss some equivalent conditions related to that definition and use it  to prove the existence of the quantum automorphism group of $\GG$, which is canonically isomorphic to that of  $\hh{\GG}$. In section 3 we discuss some simplifications and special properties appearing when one considers quantum automorphism groups of classical groups. The symbol $\tens$ will denote both the spatial tensor product of $C^*$-algebras and the algebraic tensor product of vector spaces (if we want to stress we are using the latter we write $\tens_{\textup{alg}}$).

{\bf Acknowledgement.} We would like to thank the anonymous referee for an exceptionally careful reading of our article and thoughtful comments that led to substantial improvements of its contents. In particular Theorem \ref{cyclic} and its proof are due to the referee.

\section{Finite quantum groups, duality and the Fourier transform}\label{FourierSection}

Let $\GG$ be a finite quantum group. In the following sections we will denote the finite dimensional \cst-algebra corresponding to $\GG$ by the symbol $\C(\GG)$. However, in order to keep the notation lighter, throughout this section the quantum group $\GG$ will be fixed and we will write $\cA$ for the algebra $\C(\GG)$. Then $\cA$ is a finite dimensional Hopf $*$-algebra whose coproduct, antipode and counit will be denoted by $\Delta$, $S$ and $\epsilon$ respectively. We will use the symbol $h$ to denote the Haar state of $\GG$ and occasionally employ the \emph{Sweedler notation} for the coproduct: $\Delta(a):=a_{(1)} \tens a_{(2)}$, $a \in \cA$. The \emph{convolution product} of two elements $a,b\in\cA$ is given by
\begin{equation}\label{conv}
a\star{b}=(h\tens\id)\Bigl(\bigl((S\tens\id)\Delta(b)\bigr)(a\tens\I)\Bigr).
\end{equation}
As $\GG$ is finite, the antipode $S$ is an involution commuting with the usual adjoint of $\cA$ and $h$ is a trace (we will use these facts in what follows without further comment). Hence \eqref{conv} coincides with the formula proposed in \cite[Proposition 2.2]{VDFourier}, i.e.~$a\star{b}=h\bigl(S^{-1}(b_{(1)})a\bigr)b_{(2)}$. Note that a different formula for the convolution product is used in \cite{PodW}. The convolution product is in fact an associative bilinear operation making $\cA$ into an involutive algebra with involution $\bullet$ defined as
\begin{equation}\label{convadj}
\cA\ni{a}\longmapsto{a^\bullet}=S(a^*)\in\cA.
\end{equation}
This involution is referred to as the \emph{convolution adjoint}.

Let us note here the important relation between the Haar state and antipode: for any $b,c\in\cA$ we have
\begin{equation}\label{HW}
S\Bigl((\id\tens{h})\bigl(\Delta(b)(\I\tens{c})\bigr)\Bigr)
=(\id\tens{h})\bigl((\I\tens{b})\Delta(c)\bigr).
\end{equation}
This relation can be found in \cite[Proof of Proposition 3.11]{VDduality} (see also \cite[Lemma 5.5]{mha}). It is worth mentioning that in the context of Kac algebras \eqref{HW} is taken as the defining property of $h$ (\cite[Section 2.2]{EnockSchwartz}).

Every finite quantum group $\GG$ possesses a (unique) element  $\eta\in\cA$ such that $\epsilon(\eta)=1$ and
\[
a\eta=\epsilon(a)\eta,\qquad{a}\in\cA.
\]
In fact every finite-dimensional Hopf algebra $\cB$ admits an element satisfying the properties listed above, often called a \emph{Haar element in} $\cB$ (see \cite{LarSweed}, Chapter 5 in \cite{Sweed} and \cite{Haarfinite} for the quantum group case).
For example, when $\GG=G$ for a finite group $G$, we have $\eta=\delta_{e}$ and $h(\eta)=\frac{1}{|G|}$. On the other hand if $\GG=\hh{G}$ for a finite group $G$, i.e.~$\cA=\CC[G]$, then $\eta=\frac{1}{|G|}\sum_{g\in\Gamma}\lambda_g$, but we still have $h(\eta)=\frac{1}{|G|}$ (see also Subsection \ref{FundEx}). More generally one can show that $h(\eta)=\tfrac{1}{\dim\cA}$ (cf.~\cite[Section A.2]{pseudogr}).

Let us now briefly describe the \emph{dual} $\hh{\GG}$ of the quantum group $\GG$. The standard notation for \cst-algebra corresponding to $\hh{\GG}$ is $\C(\hh{\GG})$, but in this section we will use $\hh{\cA}$ to denote it. As a vector space $\hh{\cA}$ is defined to be the set
\[
\bigl\{h(\cdot\,a)\st{a}\in\cA\bigr\}.
\]
Clearly it is a subspace of the dual space of $\cA$, but thanks to faithfulness of the Haar state $h$, it is in fact the whole of $\cA^*$. The isomorphism of vector spaces
\[
\cA\ni{a}\longmapsto{h(\cdot\,a)}\in\hh{\cA}
\]
is called the \emph{Fourier transform} and is denoted by $\cF$. The space $\cA$ can be equipped with the structure of a Hopf $*$-algebra which we will describe below.

\subsection{Unital $*$-algebra structure} The product in $\hh{\cA}$ is defined as convolution of functionals: for $\omega_1,\omega_2\in\hh{\cA}$ we define $\omega_1\omega_2=(\omega_1\tens\omega_2)\comp\Delta$. One easily checks that this is indeed an associative multiplication on $\hh{\cA}$ and the counit of $\cA$ is the unit of $\hh{\cA}$. We will often write $\hh{\I}$ to denote the unit of $\hh{\cA}$. The involution making $\cA$ a $*$-algebra is the mapping $\omega\mapsto\omega^*$, where
\[
\omega^*(a)=\Bar{\omega\bigl(S(a)^*\bigr)},\qquad{a}\in\cA.
\]

In terms of the Fourier transform the structure described above has the following property (which can be taken as a definition of $\star$):
\begin{equation} \label{Fourierconv}
\cF(a\star{b})=\cF(a)\cF(b),\qquad{a},b\in\cA.
\end{equation}
To see this let us take any $c\in\cA$. Then
\[
\begin{split}
\cF(a\star{b})(c)
&=h\bigl((a\star{b})c\bigr)
 =h\Bigl((h\tens\id)\bigl(((S\tens\id)\Delta(b))(a\tens\I)\bigr)c\Bigr)\\
&=(h\tens{h})\Bigl(\bigl((S\tens\id)\Delta(b)\bigr)(a\tens{c})\Bigr)
 =h\Bigl((\id\tens{h})\bigl(((S\tens\id)\Delta(b))(\I\tens{c})\bigr)a\Bigr)\\
&=h\Bigl((\id\tens{h})\bigl((\I\tens{b})\Delta(c)\bigr)a\Bigr)
 =(h\tens{h})(c_{(1)}a\tens{b}c_{(2)})\\
&=h(ac_{(1)})h(bc_{(2)})
 =\bigl(\cF(a)\tens\cF(b)\bigr)\Delta(c)
=\bigl(\cF(a)\cF(b)\bigr)(c),
\end{split}
\]
where we used \eqref{HW} and traciality of $h$.

The involution of $\hh{\cA}$ is also easily expressed with help of the Fourier transform. Indeed, using the fact that $h$ is positive and $S$-invariant, we obtain
\[
\begin{aligned}
\bigl(\cF(b)\bigr)^*(a)&=\Bar{\cF(b)\bigl(S(a)^*\bigr)}\\
&=\Bar{h\bigl(S(a)^*b\bigr)}\\
&=h\bigl(b^*S(a)\bigr)=h\bigl(aS(b^*)\bigr)\\
&=\cF\bigl(S(b^*)\bigr)(a)
\end{aligned}
\]
For all $a,b\in\cA$. In other words
\begin{equation}\label{Fbu}
\cF(b)^*=\cF\bigl(S(b^*)\bigr)=\cF(b^\bullet),\qquad{b}\in\cA.
\end{equation}

Finally, the unit $\hh{\I}$ of $\hh{\cA}$ can be written $\cF\bigl(\tfrac{1}{h(\eta)}\eta\bigr)$, as follows immediately from the properties of $\eta$.

\subsection{The coproduct}\label{hhDel}
We identify (canonically) $\hh{\cA}\tens\hh{\cA}$ with $(\cA\tens\cA)^*$. Now, given $\omega\in\hh{\cA}$ we define $\hh{\Delta}(\omega)$ as the element of $\hh{\cA}\tens\hh{\cA}$ for which we have
\begin{equation}\label{Delhat}
\bigl(\hh{\Delta}(\omega)\bigr)(c_1\tens{c_2})=\omega(c_1c_2),\qquad{c_1},c_2\in\cA.
\end{equation}
This defines a coassociative coproduct $\hh{\Delta}\colon\hh{\cA}\to\hh{\cA}\tens\hh{\cA}$.

In terms of the Fourier transform we have $\hh{\Delta}\bigl(\cF(a)\bigr)=\sum\limits_{i}\cF(a_i)\tens\cF(b_i)$ (for $a \in \cA$) if and only if
\[
\sum_ih(c_1a_i)h(c_2b_i)=h(c_1c_2a)
\]
for all $c_1,c_2\in\cA$.

\subsection{The counit, the antipode and the Haar measure} The counit $\hh{\epsilon}$ of $\hh{\cA}$ is the functional given by evaluating $\omega\in\hh{\cA}$ in the unit $\I$ of $\cA$, as follows from \eqref{Delhat}. In terms of the Fourier transform we have
\[
\hh{\epsilon}\bigl(\cF(a)\bigr)=h(a),\qquad{a}\in\cA.
\]

The antipode is defined directly via the duality of vector spaces, i.e.~$\hh{S}(\omega)=\omega\comp{S}$ for $\omega\in\cA^*\cong\hh{\cA}$. Using the fact that $h$ is $S$-invariant and a trace we can write $\hh{S}$ in terms of the Fourier transform as follows ($a,b\in \cA$):
\[
\hh{S}\bigl(\cF(b)\bigr)(a)=\cF(b)\bigl(S(a)\bigr)=h\bigl(S(a)b\bigr)=h\bigl(S(b)a\bigr)=h\bigl(aS(b)\bigr)=\cF\bigl(S(b)\bigr)(a),
\]
so that
\begin{equation}\label{SFFS}
\hh{S}\comp\cF={\cF}\comp{S}.
\end{equation}

The Haar measure of $\hh{\cA}$ is defined as
\[
\hh{h}\bigl(\cF(a)\bigr)=h(\eta)\epsilon(a),\qquad{a}\in\cA.
\]
This definition is different from the one given by Van Daele because he did not require $\hh{h}$ to be a state (the original formula did not incorporate the constant $h(\eta)$). 

\subsection{The Haar element, the convolution product and the convolution adjoint}

It can be easily checked that the Haar element of the dual quantum group, $\hh{\eta}$, is equal to $\cF(\I)$. Indeed, for any $a\in\cA$ we have $\I\star{a}=a\star\I=h(a)\I$, so
\[
\cF(\I)\cF(a)=\cF(\I\star{a})=h(a)\cF(\I)=\hh{\epsilon}\bigl(\cF(a)\bigr)\cF(\I)
\]
and the other required equality follows similarly. Furthermore, by \eqref{SFFS} and \eqref{Fbu} we have
\[
\cF(a)^\bullet=\hh{S}\bigl(\cF(a)\bigr)^*=\cF\bigl(S(a)\bigr)^*=\cF\bigl(S(a)^\bullet\bigr)=\cF(a^*),\qquad{a}\in\cA.
\]

Note also another useful formula which holds for all $a,b\in\cA$:
\[
\epsilon(a\star{b})
=\epsilon\Bigl((h\tens\id)\bigl((S\tens\id)\Delta(b)(a\tens\I)\bigr)\Bigr)
=h\Bigl((\id\tens\epsilon)\bigl((S\tens\id)\Delta(b)(a\tens\I)\bigr)\Bigr)
=h\bigl(S(b)a\bigr).
\]

Now let us examine the convolution product on $\hh{\cA}$ in terms of the Fourier transform. We have
\begin{equation} \label{Fourierconvdual}
h(\eta)\,\cF(ab)=\cF(b)\star\cF(a),\qquad{a},b\in\cA.
\end{equation}
Indeed, write $\hh{\Delta}\bigl(\cF(a)\bigr)=\sum\limits_{i}\cF(c_i)\tens\cF(d_i)$ and compute
\[
\begin{split}
\cF(b)\star\cF(a)
&=(\hh{h}\tens\id)\Bigl(\bigl((\hh{S}\tens\id)\hh{\Delta}(\cF(a)\bigr)(\cF(b)\tens\I)\Bigr)\\
&=\sum\limits_{i}(\hh{h}\tens\id)\Bigl(\bigl(\hh{S}(\cF(c_i))\tens\cF(d_i)\bigr)(\cF(b)\tens\I)\Bigr)\\
&=\sum\limits_{i}(\hh{h}\tens\id)\bigl(\cF(S(c_i))\cF(b)\tens\cF(d_i)\bigr)
 =\sum\limits_{i}\hh{h}\bigl(\cF(S(c_i)\star{b})\bigr)\cF(d_i)\\
&=h(\eta)\sum\limits_{i}\epsilon\bigl(S(c_i)\star{b}\bigr)\cF(d_i)
 =h(\eta)\sum\limits_ih\bigl(S(b)S(c_i)\bigr)\cF(d_i)\\
&=h(\eta)\sum\limits_ih(c_ib)\cF(d_i).
\end{split}
\]
It remains to see that $\sum\limits_ih(c_ib)\cF(d_i)=\cF(ab)$. To that end note that for any $x\in\cA$
\[
\sum_ih(c_ib)\cF(d_i)(x)=\sum_ih(c_ib)h(d_ix)=\sum_ih(bc_i)h(xd_i)=h(bxa)=h(abx)=\cF(ab)(x),
\]
where the third equality is a consequence of the definition of $\hh{\Delta}$ (cf.~Section \ref{hhDel}). This proves formula \eqref{Fourierconvdual}.

The reason for the extra flip and the normalizing factor can be seen from Lemma \ref{Fouriter} below (if one remembers that $S$ is anti-homomorphic).

\subsection{Iteration of the Fourier transform}

\begin{lemma} \label{Fouriter}
We have $\hh{\cF}\comp\cF=h(\eta)S$. In particular, the Fourier transform, when suitably rescaled, is a ``transformation of order 4''.
\end{lemma}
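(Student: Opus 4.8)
The plan is to unravel both Fourier transforms in terms of the Haar states $h$ and $\hh h$, and reduce the claimed identity $\hh\cF\comp\cF = h(\eta)S$ to the defining relation \eqref{HW} together with traciality and $S$-invariance of $h$. Concretely, fix $a\in\cA$. Then $\cF(a) = h(\cdot\,a)\in\hh\cA$, and applying $\hh\cF$ to this element produces a functional on $\hh\cA$, namely $\hh h(\cdot\,\cF(a))$. To land back in $\cA$ under the canonical identification $\hh{\hh\cA}\cong\cA$ (the biduality $a\mapsto\mathrm{ev}_a$), I must compute $\hh h\bigl(\cF(b)\cF(a)\bigr)$ for an arbitrary $b\in\cA$ and recognize the result as $h(\eta)\,h\bigl(b\,S(a)\bigr) = h(\eta)\,\bigl(\cF(S(a))\bigr)(b)$, which would give $\hh\cF(\cF(a)) = h(\eta)S(a)$ as desired.

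The key computation thus runs as follows. Using \eqref{Fourierconv} we have $\cF(b)\cF(a) = \cF(b\star a)$, so $\hh h\bigl(\cF(b)\cF(a)\bigr) = \hh h\bigl(\cF(b\star a)\bigr) = h(\eta)\,\epsilon(b\star a)$ by the definition of the dual Haar measure $\hh h(\cF(x)) = h(\eta)\epsilon(x)$. Now I invoke the formula for $\epsilon$ of a convolution product established just above Lemma \ref{Fouriter}, namely $\epsilon(b\star a) = h\bigl(S(a)b\bigr)$; by traciality of $h$ this equals $h\bigl(b\,S(a)\bigr)$. Hence $\hh h\bigl(\cF(b)\cF(a)\bigr) = h(\eta)\,h\bigl(b\,S(a)\bigr)$ for all $b\in\cA$, which is precisely $h(\eta)$ times the value of the functional $\cF(S(a))$ at $b$. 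Under the identification of $\hh{\hh\cA}$ with $\cA$ this reads $\bigl(\hh\cF\comp\cF\bigr)(a) = h(\eta)S(a)$, proving the first assertion. The ``order 4'' remark then follows formally: writing $T = h(\eta)^{-1/2}\cF$ (mixing the two Fourier transforms on $\cA$ and $\hh\cA$, which is legitimate since $h(\eta) = \hh h(\hh\eta) = 1/\dim\cA$ is the same constant on both sides), one gets $T^2 = S$ on each side, hence $T^4 = S^2 = \id$ since $S$ is an involution on a finite quantum group.

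The only genuinely delicate point is bookkeeping with the biduality identification: one must be careful that $\hh\cF$ is defined with respect to $\hh h$ and that $\hh{\hh\cA}$ is identified with $\cA$ via $a\mapsto(\omega\mapsto\omega(a))$, so that an equality of functionals ``$\hh\cF(\cF(a))(b) = h(\eta)h(bS(a))$ for all $b$'' really does translate into the operator identity stated. Everything else is a direct chain of previously recorded formulas — the convolution formula \eqref{Fourierconv}, the definition of $\hh h$, the expression for $\epsilon$ of a convolution, and traciality of $h$ — so there is no serious analytic obstacle; the risk is purely one of misplacing an antipode or a flip, which is exactly why the author flags the ``extra flip and normalizing factor'' in \eqref{Fourierconvdual} as being explained by this lemma.
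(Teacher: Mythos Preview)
Your proof is correct and essentially identical to the paper's own argument: both compute $\hh h\bigl(\cF(c)\cF(a)\bigr)$ for generic $c$, reduce via \eqref{Fourierconv} and the definition of $\hh h$ to $h(\eta)\epsilon(c\star a)$, and then use the formula $\epsilon(c\star a)=h\bigl(S(a)c\bigr)$ to recognize the result as $\cF(c)\bigl(h(\eta)S(a)\bigr)$. The only cosmetic difference is that the paper re-expands $\epsilon(c\star a)$ inline rather than citing the displayed formula, and it writes the final pairing directly as $\cF(c)\bigl(S(a)\bigr)$ instead of passing through traciality to $\cF\bigl(S(a)\bigr)(c)$---but the content is the same.
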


\begin{proof}
 The proof is an explicit calculation. Let $a\in\cA$. Put $b=\hh{\cF}\bigl(\cF(a)\bigr)\in\bigl((\cA)^*\bigr)^*$. We will show that for any $\omega\in\cA^*$ we have $\omega\bigl(h(\eta) S(a)\bigr)=b(\omega)$. We can assume that $\omega=\cF(c)$ for an element $c\in\cA$. Then
\[
\begin{split}
b(\omega)&=\hh{h}\bigl(\omega\cF(a)\bigr)\\
&=\hh{h}\bigl(\cF(c)\cF(a)\bigr)\\
&=\hh{h}\bigl(\cF(c\star{a})\bigr)\\
&=h(\eta)\epsilon(c\star{a})\\
&=h(\eta)(h\tens\epsilon)\bigl((S\tens\id)\Delta(a)(c\tens\I)\bigr)\\
&=h(\eta)h\bigl(S(\id\tens\epsilon)(\Delta(a))c\bigr)\\
&=h(\eta)h\bigl(S(a)c\bigr)\\
&=h(\eta)\cF(c)\bigl(S(a)\bigr)=\omega\bigl(h(\eta)S(a)\bigr).
\end{split}
\]
This ends the proof.
\end{proof}

\subsection{Fundamental examples}\label{FundEx}

\subsubsection{Algebra of functions on a finite group}

Let $G$ be a finite group and let $\cA=\Fun(G)$ with the standard pointwise structure. The coproduct can be written in the basis $\{\delta_g\}_{g\in{G}}$ as
\[
\Delta(\delta_g)=\sum_{ab=g}\delta_a\tens\delta_b,\qquad{g}\in G.
\]
Note that further we have
\[
\delta_g^{\bullet}=\delta_{g^{-1}},\quad\text{and}\quad\delta_g\star\delta_h=\frac{1}{|G|}\delta_{gh},\qquad{g},h\in{G}.
\]

The product $\cF(\delta_{g_1})\cF(\delta_{g_2})$ of elements of $\hh{\cA}$ is the functional
\[
\cA\ni{a}\longmapsto\bigl(h(\cdot\,\delta_{g_1})\tens{h(\cdot\,\delta_{g_2})\bigr)}\Delta(a).
\]
For $a=\delta_g$ we find the relevant value to be
\[
(h\tens{h})\sum_{ab=g}\delta_a\delta_{g_1}\tens\delta_b\delta_{g_2}
=\begin{cases}
\tfrac{1}{|G|^2},&g_1g_2=g,\\
0,&\text{else}.
\end{cases}
\]
This means that for all $g_1,g_2\in{G}$
\[
\cF(\delta_{g_1})\cF(\delta_{g_2})=\tfrac{1}{|G|}\cF(\delta_{g_1g_2}).
\]
Similarly, using the description of $\hh{\Delta}$ given in the Subsection \ref{hhDel}, we find that for all $g\in{G}$
\[
\hh{\Delta}\bigl(\cF(\delta_g)\bigr)=\tfrac{1}{|G|}\cF(\delta_g)\tens\cF(\delta_g).
\]
Thus the map
\[
\hh{\cA}\ni\cF(\delta_g)\longmapsto\tfrac{1}{|G|}\lambda_g\in\CC[G]
\]
is an isomorphism of Hopf algebras. Note that $\tfrac{1}{|G|}=h(\eta)$.

\subsubsection{Group ring of a finite group}

Let $G$ be a finite group and let $\cA$ denote this time the group ring of $G$, $\CC[G]$: we view it now naturally as the algebra of functions on the dual quantum group $\hh{G}$. The coproduct on $\cA$ is given by the formula
\[
\Delta(\lambda_g)=\lambda_g\tens\lambda_g,\qquad{g}\in{G},
\]
and further
\[
\lambda_g^{\bullet}=\lambda_g,\quad\text{and}\quad\lambda_g\star\lambda_h=\delta_{g,h}\lambda_h,\qquad{g},h\in{G}.
\]

This time the Fourier transform $\cF$ maps $\CC[G]$ onto $\Fun(G)$, and the calculations similar to these above (or an application of Lemma \ref{Fouriter}) yield in this picture the following formula:
\[
\cF(\lambda_g)=\delta_{g^{-1}},\qquad{g}\in{G}.
\]

\section{Quantum automorphisms of a finite quantum group}\label{Secdefautomor}



Throughout this section $\GG$ will denote a finite quantum group. The corresponding finite dimensional \cst-algebra playing the role of the algebra of functions on $\GG$ will be denoted by the symbol $\C(\GG)$. If $\sB$ is a unital \cst-algebra and $\alpha\colon\C(\GG)\to\C(\GG)\tens\sB$ a linear map, then following \cite{qs}, we say that $\alpha$ \emph{represents a quantum family of  maps on} $\GG$ if it is a unital $*$-homomorphism. We say that it \emph{represents a quantum family of invertible maps on} $\GG$ if in addition the \emph{Podle\'s condition} holds: $\alpha\bigl(\C(\GG)\bigr)(\I\tens\sB)$ spans $\C(\GG)\tens\sB$.

Due to finite dimensionality of $\C(\GG)$, the last condition is purely vector space theoretic: it means that the set of elements of the form $\bigl\{(\id\tens\omega)\bigl(\alpha(a)\bigr)\st{a}\in\C(\GG),\;\omega\in\sB^*\bigr\}$ spans $\C(\GG)$.

Given a linear map $\alpha\colon\C(\GG)\to\C(\GG)\tens\sB$  we define another linear map $\hh{\alpha}\colon\C(\hh{\GG})\to\C(\hh{\GG})\tens\sB$ by the formula
\begin{equation}\label{hatmap}
\hh{\alpha}=\frac{1}{h(\eta)}(\cF\tens\id_{\sB})\comp\alpha\comp\hh{\cF}\comp\hh{S}.
\end{equation}
Note that $\hh{\cF}$ denotes here (as in the previous section) simply the Fourier transform associated to the dual quantum group $\hh{\GG}$.

It may be noteworthy that the formula for $\hh{\alpha}$ can be expressed by another formula which involves the Fourier transform on $\GG$ alone. More precisely, we have:

\begin{proposition}\label{alhatInne}
$\hh{\alpha}=(\cF\tens\id)\comp\alpha\comp\cF^{-1}$.
\end{proposition}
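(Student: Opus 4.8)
The plan is to reduce the asserted equality to a statement purely about Fourier transforms and antipodes, and then to deduce that statement from Lemma~\ref{Fouriter}. Comparing the right-hand side of the proposition with the definition \eqref{hatmap}, it is enough to prove the operator identity
\[
\frac{1}{h(\eta)}\,\hh{\cF}\comp\hh{S}=\cF^{-1}
\]
as linear maps from $\C(\hh{\GG})$ to $\C(\GG)$; substituting this into \eqref{hatmap} then yields $\hh{\alpha}=(\cF\tens\id)\comp\alpha\comp\cF^{-1}$ at once. Since $\cF\colon\C(\GG)\to\C(\hh{\GG})$ is a linear isomorphism, this identity is in turn equivalent to $\tfrac{1}{h(\eta)}\,\cF\comp\hh{\cF}\comp\hh{S}=\id_{\C(\hh{\GG})}$, which is what I would actually verify.

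The first step is to prove the ``reversed'' iteration formula $\cF\comp\hh{\cF}=h(\eta)\,\hh{S}$ on $\C(\hh{\GG})$. For this I would start from Lemma~\ref{Fouriter}, which gives $\hh{\cF}\bigl(\cF(a)\bigr)=h(\eta)S(a)$ for every $a\in\cA$, apply $\cF$ to both sides, and rewrite $\cF\bigl(S(a)\bigr)=\hh{S}\bigl(\cF(a)\bigr)$ using \eqref{SFFS}; this gives $\cF\bigl(\hh{\cF}(\cF(a))\bigr)=h(\eta)\,\hh{S}\bigl(\cF(a)\bigr)$, and since every element of $\C(\hh{\GG})$ is of the form $\cF(a)$ the formula follows. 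Here one must keep track of the canonical identification of the double dual $\C(\hh{\hh{\GG}})$ with $\C(\GG)$ exactly as was done in the proof of Lemma~\ref{Fouriter}. (Alternatively, the same formula is obtained by applying Lemma~\ref{Fouriter} to the finite quantum group $\hh{\GG}$ in place of $\GG$, once one notes that the associated Haar element satisfies $\hh{\eta}=\cF(\I)$ and hence $\hh{h}(\hh{\eta})=h(\eta)$.)

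It remains to compose the relation of the previous step with $\hh{S}$ on the right: since the antipode of a finite quantum group is an involution --- indeed $\hh{S}\comp\hh{S}=\id$ follows from $S\comp S=\id$ together with \eqref{SFFS} --- we get $\cF\comp\hh{\cF}\comp\hh{S}=h(\eta)\,\hh{S}\comp\hh{S}=h(\eta)\,\id_{\C(\hh{\GG})}$, which is exactly the reduced identity from the first paragraph. I expect the only real subtlety to be bookkeeping: making sure the various dualisations line up so that all domains and codomains are correct, and invoking involutivity of the antipode at the right place; beyond that the argument is a direct unwinding of \eqref{hatmap} using Lemma~\ref{Fouriter} and \eqref{SFFS}.
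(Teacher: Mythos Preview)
Your proposal is correct and follows essentially the same route as the paper: both reduce the claim to the identity $\hh{\cF}\comp\hh{S}=h(\eta)\,\cF^{-1}$ and then substitute into the definition~\eqref{hatmap}, using $\hh{h}(\hh{\eta})=h(\eta)$ and the involutivity of the antipode. The only cosmetic difference is that the paper obtains $\cF\comp\hh{\cF}=\hh{h}(\hh{\eta})\,\hh{S}$ by applying Lemma~\ref{Fouriter} directly to $\hh{\GG}$ (precisely your parenthetical ``alternatively'' remark), whereas your primary derivation reaches the same identity by composing Lemma~\ref{Fouriter} with $\cF$ and invoking~\eqref{SFFS}.
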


\begin{proof}
Using Lemma \ref{Fouriter}, we have $\hh{\cF}\comp\hh{S}^{-1}=\hh{h}(\hh{\eta}){\cF}^{-1}$, i.e.~$\hh{\cF}\comp\hh{S}=\hh{h}(\hh{\eta}){\cF}^{-1}.$ Thus, $\hh{\alpha}=\frac{1}{h(\eta)}(\cF\tens\id_{\sB})\comp\alpha\comp\hh{\cF}\comp\hh{S}
=\frac{\hh{h}(\hh{\eta})}{h(\eta)}(\cF\tens\id)\comp\alpha\comp\cF^{-1}=(\cF\tens\id)\comp\alpha\comp\cF^{-1}$, since $\hh{h}(\hh{\eta})=h(\eta)$.
\end{proof}

Before we formulate the next lemma we need another piece of terminology: we say that $\alpha$ as above \emph{preserves the convolution product} if $\alpha$ is a homomorphism from $\bigl(\C(\GG),\star\bigr)$ (i.e.~$\C(\GG)$ equipped with the convolution product) to $\C(\GG)\tens\sB$ with convolution product on the first factor and the given product of $\sB$ on the second (so if $\mu\colon\C(\GG)\tens\C(\GG)\to\C(\GG)$ denotes the convolution product and $m\colon\sB\atens\sB\to\sB$ the product on $\sB$ then the condition on $\alpha$ is that it is a homomorphism from $\C(\GG)$ with convolution product to $\C(\GG)\tens\sB$ with the product $(\mu\tens{m})(\id_{\C(\GG)}\tens\sigma\tens\id_\sB)$, where $\sigma$ is the flip $\sB\tens\C(\GG)\to\C(\GG)\tens\sB$).

\begin{lemma}\label{dualactionalg}
Let $\sB$ be a unital \cst-algebra and $\alpha\colon\C(\GG)\to\C(\GG)\tens\sB$ a linear map. Then
\begin{enumerate}
\item\label{dualactionalg1} $\hh{\alpha}$ is a homomorphism from $\C(\hh{\GG})$ to $\C(\hh{\GG})\tens\sB$ (with usual product) if and only if $\alpha$ preserves the convolution product,
\item\label{dualactionalg2} $\hh{\alpha}$ is $*$-preserving if and only if $\alpha$ preserves the convolution adjoint (i.e.~$(\bullet\tens*)\comp\alpha=\alpha\comp\bullet$),
\item\label{dualactionalg3} $\hh{\alpha}$ is unital if and only if $\alpha$ preserves the Haar element (i.e.~$\alpha(\eta)=\eta\tens\I_{\sB}$),
\item\label{dualactionalg4} $\hh{\alpha}$ preserves the Haar state if and only if $\alpha$ preserves the counit.
\end{enumerate}

Moreover we have the following equality:
\begin{equation}\label{displayed}
\hh{\;\!\!\hh{\alpha}}=(S\tens\id_{\sB})\comp\alpha\comp{S},
\end{equation}
so that if $\beta=\hh{\;\!\!\hh{\alpha}}$, then $\alpha=\hh{\!\hh{\beta}}$.
\end{lemma}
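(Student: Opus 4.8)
The plan is to unwind all four equivalences through Proposition~\ref{alhatInne}, which lets us work with the single Fourier transform $\cF$ on $\C(\GG)$ and its basic intertwining properties recalled in Section~\ref{FourierSection}, rather than with the more cumbersome formula \eqref{hatmap}. Write $\cF$ for $\cF\tens\id_\sB$ on $\C(\GG)\tens\sB$ where convenient. The key inputs are: \eqref{Fourierconv}, saying $\cF$ turns $\star$ into the product of $\hh{\cA}$; \eqref{Fbu}, saying $\cF$ turns the convolution adjoint $\bullet$ into the $*$ of $\hh{\cA}$; the fact (established just before Subsection~\ref{hhDel}) that $\cF(\tfrac{1}{h(\eta)}\eta)=\hh{\I}$; and the formula $\hh{\epsilon}(\cF(a))=h(a)$ together with $\hh{h}(\cF(a))=h(\eta)\epsilon(a)$ for the Haar state.

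For each part I would simply transport the relevant structure across $\cF$. For \eqref{dualactionalg1}: $\hh{\alpha}$ is multiplicative iff $(\cF\tens\id)\comp\alpha\comp\cF^{-1}$ is, iff $\alpha$ intertwines the product pulled back by $\cF^{-1}$ on the source and target; since by \eqref{Fourierconv} this pulled-back product is precisely $\star$ on $\C(\GG)$ and $\star\tens m$ (suitably flipped) on $\C(\GG)\tens\sB$ — here one notes $\sB$ carries its own product which is unaffected — this says exactly that $\alpha$ preserves the convolution product in the sense defined before the lemma. For \eqref{dualactionalg2}, the same pull-back argument with \eqref{Fbu} in place of \eqref{Fourierconv}: $\hh{\alpha}$ is $*$-preserving iff $\alpha$ intertwines $\bullet$ (on $\C(\GG)$) and $\bullet\tens *$ (on $\C(\GG)\tens\sB$), which is the stated condition $(\bullet\tens *)\comp\alpha=\alpha\comp\bullet$. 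For \eqref{dualactionalg3}: $\hh{\alpha}$ is unital iff $\hh{\alpha}(\hh{\I})=\hh{\I}\tens\I_\sB$; since $\hh{\I}=\cF(\tfrac{1}{h(\eta)}\eta)$, applying $\hh{\alpha}=(\cF\tens\id)\comp\alpha\comp\cF^{-1}$ turns this into $(\cF\tens\id)(\alpha(\tfrac{1}{h(\eta)}\eta))=\cF(\tfrac{1}{h(\eta)}\eta)\tens\I_\sB$, i.e.\ (by injectivity of $\cF$) $\alpha(\eta)=\eta\tens\I_\sB$. For \eqref{dualactionalg4}: $\hh{\alpha}$ preserves $\hh{h}$ means $(\hh{h}\tens\id_\sB)\comp\hh{\alpha}=\hh{h}(\cdot)\I_\sB$; rewriting via $\hh{\alpha}=(\cF\tens\id)\comp\alpha\comp\cF^{-1}$ and $\hh{h}\comp\cF=h(\eta)\epsilon$ turns the left side into $h(\eta)\,(\epsilon\tens\id_\sB)(\alpha(\cF^{-1}(\cdot)))$ and the right side into $h(\eta)\,\epsilon(\cF^{-1}(\cdot))\I_\sB$, so after cancelling $\cF^{-1}$ this is exactly $(\epsilon\tens\id_\sB)\comp\alpha=\epsilon(\cdot)\I_\sB$, i.e.\ $\alpha$ preserves the counit.

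For the displayed equation \eqref{displayed}, I would compute $\hh{\;\!\!\hh{\alpha}}$ by applying the formula of Proposition~\ref{alhatInne} twice: $\hh{\;\!\!\hh{\alpha}}=(\hh{\cF}\tens\id)\comp\hh{\alpha}\comp\hh{\cF}^{-1}=(\hh{\cF}\tens\id)(\cF\tens\id)\comp\alpha\comp\cF^{-1}\hh{\cF}^{-1}=(\hh{\cF}\cF\tens\id)\comp\alpha\comp(\hh{\cF}\cF)^{-1}$. By Lemma~\ref{Fouriter}, $\hh{\cF}\comp\cF=h(\eta)S$, so $(\hh{\cF}\cF)\tens\id=h(\eta)(S\tens\id_\sB)$ and $(\hh{\cF}\cF)^{-1}=\tfrac{1}{h(\eta)}S^{-1}=\tfrac{1}{h(\eta)}S$ (using $S^2=\id$ as $\GG$ is finite); the scalars $h(\eta)$ and $\tfrac{1}{h(\eta)}$ cancel, leaving $(S\tens\id_\sB)\comp\alpha\comp S$. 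Finally, $\beta=\hh{\;\!\!\hh{\alpha}}$ gives $\hh{\!\hh{\beta}}=(S\tens\id_\sB)\comp\beta\comp S=(S\tens\id_\sB)(S\tens\id_\sB)\comp\alpha\comp S\comp S=\alpha$, again by $S^2=\id$.

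I do not expect a serious obstacle here; the only points requiring care are bookkeeping ones. One must be careful that $\sB$ and its product play a purely passive role — the Fourier transform acts only on the first leg — so the flip $\sigma$ and the precise target product $(\mu\tens m)(\id\tens\sigma\tens\id)$ in the definition of ``preserves the convolution product'' match up correctly after transporting through $\cF\tens\id_\sB$; spelling this out is the most delicate verification. One should also double-check the treatment of units and scalars: that $\hh{h}(\hh{\eta})=h(\eta)$ (used already in Proposition~\ref{alhatInne}) and that $h(\eta)\neq 0$, so division is legitimate — both are clear from Section~\ref{FourierSection}. Everything else is a direct substitution using the displayed formulas of that section.
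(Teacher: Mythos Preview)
Your proposal is correct and follows essentially the same approach as the paper: both arguments transport each structure ($\star$-product, $\bullet$-involution, unit, Haar state/counit) across the Fourier transform using the identities of Section~\ref{FourierSection}, and both derive \eqref{displayed} from Lemma~\ref{Fouriter} and $S^2=\id$. The only difference is cosmetic: you use the cleaner formula $\hh{\alpha}=(\cF\tens\id)\comp\alpha\comp\cF^{-1}$ of Proposition~\ref{alhatInne} uniformly throughout, whereas the paper works with the defining formula~\eqref{hatmap} (involving $\hh{\cF}\comp\hh{S}$) for parts \eqref{dualactionalg1}, \eqref{dualactionalg2}, \eqref{dualactionalg4}, which makes its computation for \eqref{dualactionalg1} a bit longer since it then needs antimultiplicativity of $\hh{S}$ and the dual of~\eqref{Fourierconvdual}.
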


\begin{proof}
All the statements in the lemma follow from the properties of the Fourier transform established in Section \ref{FourierSection}.

Proof of \eqref{dualactionalg1}: Let us denote the product maps on $\C(\GG)$ and $\C(\hh{\GG})$ by $m$ and $\hh{m}$ respectively. Then let $m_\sB$ be the product of $\sB$ and $\mu$ and $\hh{\mu}$ the convolution products on $\C(\GG)$ and $\C(\hh{\GG})$ respectively. We will use the symbol $\sigma$ to denote flip maps on various tensor products. We have
\begin{equation}\label{to1}
\begin{split}
(\hh{m}\tens{m_\sB})&\comp(\id\tens\sigma\tens\id)\comp(\hh{\alpha}\tens\hh{\alpha})\\
&=\frac{1}{h(\eta)^2}(\hh{m}\tens{m_\sB})(\id\tens\sigma\tens\id)\comp
(\hh{\cF}\tens\id\tens\hh{\cF}\tens\id)\comp
(\alpha\tens\alpha)\comp
(\hh{\cF}\tens\hh{\cF})\comp
(\hh{S}\tens\hh{S})\\
&=\frac{1}{h(\eta)^2}(\cF\tens\id)\comp(\mu\tens{m_\sB})(\id\tens\sigma\tens\id)\comp
(\alpha\tens\alpha)\comp
(\hh{\cF}\tens\hh{\cF})\comp
(\hh{S}\tens\hh{S})\\
\end{split}
\end{equation}
On the other hand using antimultiplicativity of $\hh{S}$, a dual version of formula \eqref{Fourierconvdual} and the fact that $\hh{h}(\hh{\eta})=h(\eta)$ we find that
\begin{equation}\label{to2}
\begin{split}
\hh{\alpha}\comp\hh{m}&=\frac{1}{h(\eta)}(\cF\tens\id)\comp\alpha\comp\hh{\cF}\comp\hh{S}\comp\hh{m}\\
&=\frac{1}{h(\eta)}(\cF\tens\id)\comp\alpha\comp\hh{\cF}\comp\hh{m}\comp\sigma\comp(\hh{S}\tens\hh{S})\\
&=\frac{1}{h(\eta)^2}(\cF\tens\id)\comp\alpha\comp\mu\comp\sigma\comp\sigma\comp(\hh{\cF}\tens\hh{\cF})\comp(\hh{S}\tens\hh{S}).
\end{split}
\end{equation}
Now since $\cF$ and $\hh{\cF}\comp\hh{S}$ are linear isomorphisms, comparing \eqref{to1} with \eqref{to2} we see that
\[
\hh{\alpha}\comp\hh{m}=(\hh{m}\tens{m_\sB})\comp(\id\tens\sigma\tens\id)\comp(\hh{\alpha}\tens\hh{\alpha})
\]
if and only if
\[
\alpha\comp\mu=(\mu\tens{m_\sB})(\id\tens\sigma\tens\id)\comp(\alpha\tens\alpha).
\]

Proof of \eqref{dualactionalg3}: From the fact that the unit $\hh{\I}$ of $\C(\hh{\GG})$ is $\tfrac{1}{h(\eta)}\cF(\eta)$ and Proposition \ref{alhatInne} we immediately find that
\[
\hh{\alpha}(\hh{\I})=\frac{1}{h(\eta)}(\cF\tens\id)\alpha\cF^{-1}\cF(\eta)=\frac{1}{h(\eta)}(\cF\tens\id)\alpha(\eta).
\]
This shows that $\hh{\alpha}(\hh{\I})=\hh{\I}\tens\I$ if and only if $\alpha(\eta)=\eta\tens\I$.

Proof of \eqref{dualactionalg2}: Remembering that antipodes of finite quantum groups are $*$-preserving maps we compute
\[
\hh{\alpha}\comp*=\frac{1}{h(\eta)}(\cF\tens\id_{\sB})\comp\alpha\comp\hh{\cF}\comp\hh{S}\comp*
=\frac{1}{h(\eta)}(\cF\tens\id_{\sB})\comp\alpha\comp{\bullet}\comp\hh{\cF}\comp\hh{S}
\]
and
\[
(*\tens*)\comp\hh{\alpha}=\frac{1}{h(\eta)}(*\tens*)(\cF\tens\id_{\sB})\comp\alpha\comp\hh{\cF}\comp\hh{S}
=(\cF\tens\id_{\sB})\comp({\bullet}\tens*)\comp\alpha\comp\hh{\cF}\comp\hh{S},
\]
so, as all the maps are invertible, we see that
\[
(*\tens*)\comp\hh{\alpha}=\hh{\alpha}\comp*
\]
if and only if
\[
\alpha\comp{\bullet}=({\bullet}\tens*)\comp\alpha.
\]

Proof of \eqref{dualactionalg4}: We have
\[
(\hh{h}\tens\id_{\sB})\comp\hh{\alpha}=\frac{1}{h(\eta)}\bigl((\hh{h}\comp\cF)\tens\id_{\sB}\bigr)\comp\alpha\comp\hh{\cF}\comp\hh{S}=
(\epsilon\tens\id_{\sB})\comp\alpha\comp\hh{\cF}\comp\hh{S}
\]
and
\[
\hh{h}(\cdot)\I_{\sB}=\bigl(\:\!\hh{h}\comp\hh{S}\bigr)(\cdot)\I_{\sB}=\bigl(\epsilon\comp\hh{\cF}\comp\hh{S}\:\!\bigr)(\cdot)\I_{\sB}
\]
so that
\[
(\hh{h}\tens\id_{\sB})\hh{\alpha}=\hh{h}(\cdot)\I_{\sB}
\]
if and only if
\[
(\epsilon\tens\id_{\sB})\alpha=\epsilon(\cdot)\I_{\sB}.
\]

Formula \eqref{displayed} is a consequence of Lemma \ref{Fouriter}, Equation \eqref{Delhat} and the fact that the antipode is involutive.
\end{proof}

\begin{proposition} \label{equivaut}
Let $\sB$ be a unital \cst-algebra and $\alpha\colon\C(\GG)\to\C(\GG)\tens\sB$ represent a quantum family of invertible maps on $\GG$. Then the following conditions are equivalent:
\begin{enumerate}
\item $\alpha$ preserves the convolution multiplication, preserves the convolution adjoint and the Haar element (we will also say in short that $\alpha$ \emph{preserves the convolution structure});
\item $\hh{\alpha}$ represents a quantum family of invertible maps on $\hh{\GG}$.
\end{enumerate}
Moreover in that case we have $\hh{\;\!\!\hh{\alpha}}=\alpha$.
\end{proposition}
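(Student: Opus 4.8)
The plan is to leverage Lemma \ref{dualactionalg} as much as possible, so that the content of Proposition \ref{equivaut} reduces to tracking how the Podle\'s condition transforms under the passage $\alpha\mapsto\hh{\alpha}$, together with the bookkeeping of which structures correspond to which.

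\textbf{First I would unpack what ``quantum family of invertible maps on $\hh{\GG}$'' means for $\hh{\alpha}$}, namely: (a) $\hh{\alpha}$ is a unital $*$-homomorphism $\C(\hh{\GG})\to\C(\hh{\GG})\tens\sB$, and (b) $\hh{\alpha}$ satisfies the Podle\'s condition. For (a), parts \eqref{dualactionalg1}, \eqref{dualactionalg2} and \eqref{dualactionalg3} of Lemma \ref{dualactionalg} show respectively that $\hh{\alpha}$ is multiplicative iff $\alpha$ preserves the convolution product, that $\hh{\alpha}$ is $*$-preserving iff $\alpha$ preserves the convolution adjoint, and that $\hh{\alpha}$ is unital iff $\alpha$ preserves the Haar element. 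So condition (1) of the Proposition is \emph{exactly} the statement that $\hh{\alpha}$ is a unital $*$-homomorphism. It therefore remains to show that, \emph{given} that $\alpha$ is already a unital $*$-homomorphism satisfying the Podle\'s condition, $\hh{\alpha}$ automatically satisfies the Podle\'s condition too (and conversely, but the converse is the same statement applied in the other direction, using the final clause $\hh{\;\!\!\hh{\alpha}}=\alpha$).

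\textbf{The main step is thus the Podle\'s condition.} Here I would use the reformulation already given in the text: since everything is finite-dimensional, the Podle\'s condition for a map $\gamma\colon C\to C\tens\sB$ is equivalent to $\{(\id\tens\omega)(\gamma(c))\st c\in C,\ \omega\in\sB^*\}$ spanning $C$. Using Proposition \ref{alhatInne}, $\hh{\alpha}=(\cF\tens\id)\comp\alpha\comp\cF^{-1}$, so $(\id\tens\omega)(\hh{\alpha}(\cF(a)))=\cF\bigl((\id\tens\omega)(\alpha(a))\bigr)$ for every $a\in\C(\GG)$ and $\omega\in\sB^*$. Since $\cF$ is a linear isomorphism of $\C(\GG)$ onto $\C(\hh{\GG})$, the linear span of the left-hand sides over all $a,\omega$ is the image under $\cF$ of the span of $\{(\id\tens\omega)(\alpha(a))\}$. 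Hence the Podle\'s condition for $\hh{\alpha}$ holds if and only if it holds for $\alpha$ — in particular it does hold, by hypothesis. This is the place I expect the only genuine (if still short) work: being careful that the argument is an ``if and only if'' and does not secretly use any algebra structure, only that $\cF$ is a vector-space isomorphism and that $(\cF\tens\id)$ intertwines the slice maps.

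\textbf{Finally I would dispatch the last clause $\hh{\;\!\!\hh{\alpha}}=\alpha$.} By \eqref{displayed} in Lemma \ref{dualactionalg} we have $\hh{\;\!\!\hh{\alpha}}=(S\tens\id_{\sB})\comp\alpha\comp S$. Now $\alpha$ is a unital $*$-homomorphism, hence in particular an algebra homomorphism, while $S$ is an algebra \emph{anti}-homomorphism; so at first sight $(S\tens\id)\comp\alpha\comp S$ need not equal $\alpha$. The point is that condition (1) forces $\alpha$ to also intertwine the convolution structures, and the antipode $S$ is precisely the map realizing the order-$4$ symmetry of the Fourier transform (Lemma \ref{Fouriter}); the cleanest route is to observe that $\alpha$ being simultaneously a homomorphism for the ordinary product and for the convolution product, and compatible with $\bullet$, pins down its behaviour under $S$ via $S(a)=$ (some canonical expression in terms of $\star$, $*$ and $\eta$), which then gives $(S\tens\id)\alpha(S(a))=\alpha(a)$ directly. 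Alternatively, and perhaps more transparently: apply the equivalence (1)$\Leftrightarrow$(2) to $\beta:=\hh{\;\!\!\hh{\alpha}}$, noting that $\beta$ satisfies (1) because $\hh{\;\!\!\hh\alpha}$ is obtained from $\alpha$ by applying the ``hat'' construction twice and each application swaps (1) and (2); then $\hh{\!\hh{\beta}}=\hh{\;\!\!\hh{\hh{\;\!\!\hh\alpha}}}$, and one checks using \eqref{displayed} again that applying $(S\tens\id)\comp(-)\comp S$ twice is the identity since $S^2=\id$. I would present whichever of these is shorter once the calculation is in front of me; either way, $S$ being an involution (stated in the text) is the key fact, and this is a one-line verification rather than an obstacle.
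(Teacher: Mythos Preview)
Your approach is essentially the paper's: use Lemma \ref{dualactionalg} to match condition (1) with ``$\hh\alpha$ is a unital $*$-homomorphism'', then transfer the Podle\'s condition via the fact that $\cF$ is a linear isomorphism, then deduce $\hh{\;\!\!\hh\alpha}=\alpha$ from \eqref{displayed}. Your slice-map argument for Podle\'s is a perfectly good variant of the paper's direct span computation.

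Two comments on the final clause. Your first route is the right one but is simpler than you make it sound: no products are needed, only the two involutions. From \eqref{convadj} one has $S=\bullet\comp *$, so if $\alpha$ preserves both the ordinary adjoint $*$ and the convolution adjoint $\bullet$ (which it does, by hypothesis and by condition (1)), then $(S\tens\id_\sB)\comp\alpha=(\bullet\tens *)\comp(*\tens *)\comp\alpha=(\bullet\tens *)\comp\alpha\comp *=\alpha\comp\bullet\comp *=\alpha\comp S$, and hence $(S\tens\id_\sB)\comp\alpha\comp S=\alpha\comp S^2=\alpha$. This is exactly what the paper does (``preservation of any two of $*,\bullet,S$ implies preservation of the third'').

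Your ``alternative'' route, however, does not close: using $S^2=\id$ in \eqref{displayed} shows that applying the hat construction \emph{four} times returns $\alpha$, i.e.\ $\hh{\;\!\!\hh{\hh{\;\!\!\hh\alpha}}}=\alpha$, but you need the \emph{two}-fold statement $\hh{\;\!\!\hh\alpha}=\alpha$. That genuinely requires $\alpha$ to commute with $S$, which in turn needs condition (1); the purely formal $S^2=\id$ argument is not enough.
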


\begin{proof}
We claim that $\alpha$ satisfies Podle\'s condition implies that $\hh{\alpha}$ satisfies Podle\'s condition. Indeed, suppose that given $a\in\C(\GG)$, there exists elements $a_1,\dotsc,a_n\in\C(\GG)$ and $q_1,\dotsc,q_n\in\sB$ such that $ \sum\limits_{i=1}^n\alpha\bigl(S(a_i)\bigr)(\I\tens{q_i})=a\tens\I$. Then
\[
\sum_{i=1}^n\hh{\alpha}\bigl(\hh{S}\cF(a_i)\bigr)(\I\tens{q_i})
=\sum_{i=1}^n(\cF\tens\id)\alpha\bigl(S(a_i)\bigr)(\I\tens{q_i})
=\cF(a)\tens\I.
\]
This proves the claim. Thus, the  equivalence is an immediate consequence of Lemma \ref{dualactionalg}.

The second statement follows from the fact that if $\alpha$ preserves both the usual adjoint and the convolution adjoint, it also preserves the antipode (cf.~\eqref{convadj}, in fact, preservation of any two of these maps implies the preservation of the third) and formula \eqref{displayed}.
\end{proof}

\begin{definition}\label{defquantaut}
Let $\sB$ be a unital \cst-algebra and let $\alpha\colon\C(\GG)\to\C(\GG)\tens\sB$ represent a quantum family of invertible maps. We say that $\alpha$ represents a \emph{quantum family of automorphisms} of $\GG$ (indexed by $\sB$) if the (equivalent) conditions from Proposition \ref{equivaut} hold.
\end{definition}

\begin{corollary}\label{corquantaut}
Let $\sB$ be a unital \cst-algebra and $\alpha\colon\C(\GG)\to\C(\GG)\tens\sB$. Then the following conditions are equivalent
\begin{enumerate}
\item\label{corquantaut1} $\alpha$ represents a quantum family of automorphisms of $\GG$;
\item\label{corquantaut2} $\hh{\alpha}$ represents a quantum family of automorphisms of $\hh{\GG}$.
\end{enumerate}
\end{corollary}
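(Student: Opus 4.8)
The plan is to reduce Corollary \ref{corquantaut} entirely to the bijectivity and symmetry properties of the doubling operation $\alpha \mapsto \hh{\alpha}$ already recorded in Lemma \ref{dualactionalg} and Proposition \ref{equivaut}. First I would observe that the statement is the conjunction of two facts: (a) that $\alpha$ represents a quantum family of invertible maps on $\GG$ if and only if $\hh{\alpha}$ represents a quantum family of invertible maps on $\hh{\GG}$; and (b) that, under that hypothesis, $\alpha$ preserves the convolution structure of $\GG$ if and only if $\hh{\alpha}$ preserves the convolution structure of $\hh{\GG}$. Granting (a) and (b), condition \eqref{corquantaut1} says $\alpha$ is a convolution-structure-preserving quantum family of invertible maps, which by (a) and (b) is equivalent to $\hh{\alpha}$ being a convolution-structure-preserving quantum family of invertible maps on $\hh{\GG}$, i.e.\ condition \eqref{corquantaut2} by Definition \ref{defquantaut}.

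For step (a), I would argue as follows. If $\alpha$ represents a quantum family of automorphisms of $\GG$, then by Proposition \ref{equivaut} its dual $\hh{\alpha}$ already represents a quantum family of invertible maps on $\hh{\GG}$ — this is exactly the content of the equivalence (1)$\Leftrightarrow$(2) there. For the converse direction and for (b) I would exploit the involutivity formula \eqref{displayed}: since $\hh{\;\!\!\hh{\alpha}} = (S\tens\id_{\sB})\comp\alpha\comp S$ and $S$ is a $*$-automorphism of $\C(\GG)$ (being an involutive $*$-preserving antipode of a finite quantum group) which intertwines $\Delta$ up to a flip, the map $\alpha$ represents a quantum family of invertible maps on $\GG$ precisely when $\hh{\;\!\!\hh{\alpha}}$ does, and similarly $\alpha$ preserves the convolution structure of $\GG$ iff $\hh{\;\!\!\hh{\alpha}}$ does (here one uses that $S$ is a homomorphism for $\star$ — equivalently $S$ is an anti-homomorphism for the ordinary product — and that $S$ fixes the Haar element and intertwines the convolution adjoint with itself). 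Applying the already-established implication \eqref{corquantaut1}$\Rightarrow$\eqref{corquantaut2}-type reasoning to $\hh{\alpha}$ in place of $\alpha$, and then transporting back through $\hh{\;\!\!\hh{\alpha}}\cong\alpha$, closes the loop; so in fact the corollary is a formal consequence of Proposition \ref{equivaut} together with \eqref{displayed}.

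Concretely, I would phrase the proof in two lines: apply Proposition \ref{equivaut} to $\alpha$ to get \eqref{corquantaut1}$\Rightarrow$\eqref{corquantaut2} — noting that $\hh{\alpha}$ is again a quantum family of invertible maps and that the conditions on $\alpha$ in part (1) of Proposition \ref{equivaut} translate, via the four items of Lemma \ref{dualactionalg} read for $\hh{\alpha}$, into the statement that $\hh{\alpha}$ preserves the convolution structure of $\hh{\GG}$; and then apply the same implication with $\hh{\alpha}$ in place of $\alpha$, using $\hh{\;\!\!\hh{\alpha}} = (S\tens\id)\comp\alpha\comp S$ and the fact that $S$ is a convolution-structure- and quantum-family-of-invertible-maps-preserving isomorphism of $\C(\GG)$, to deduce \eqref{corquantaut2}$\Rightarrow$\eqref{corquantaut1}.

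The only genuine obstacle is bookkeeping: one must make sure that "preserving the convolution structure of $\hh{\GG}$" unfolds into the hypotheses of Proposition \ref{equivaut}(1) applied to the quantum group $\hh{\GG}$ (its convolution multiplication, convolution adjoint, and Haar element), and that Lemma \ref{dualactionalg}, when read with roles of $\GG$ and $\hh{\GG}$ swapped, indeed matches these up with the ordinary multiplicativity, $*$-preservation, and unitality of $\hh{\;\!\!\hh{\alpha}}$ — which are the defining features of $\hh{\alpha}$ being a quantum family of maps. Since Lemma \ref{dualactionalg} is stated for an arbitrary finite quantum group, this swap is legitimate, and the self-duality $\hh{\;\!\!\hh{\GG}}\cong\GG$ makes everything consistent; so no new estimate or construction is needed beyond what the excerpt already provides.
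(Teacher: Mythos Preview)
Your overall strategy matches the paper's --- prove \eqref{corquantaut1}$\Rightarrow$\eqref{corquantaut2} via Proposition~\ref{equivaut} (using that $\hh{\;\!\!\hh{\alpha}}=\alpha$ is a unital $*$-homomorphism with Podle\'s condition, which feeds back into Proposition~\ref{equivaut} applied to $\hh{\alpha}$), then apply that implication to $\hh{\alpha}$ for the converse --- and the forward direction is fine. The backward direction, however, contains an error in the ``transporting back'' step. You assert that $S$ is a $*$-automorphism of $\C(\GG)$ and a homomorphism for $\star$, and that conjugation $\gamma\mapsto(S\tens\id)\comp\gamma\comp S$ therefore preserves the property of representing a quantum family of invertible maps (resp.\ automorphisms). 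Both algebraic claims about $S$ are false: the antipode is an \emph{anti}-homomorphism for the ordinary product, and also an \emph{anti}-homomorphism for $\star$ (apply $\cF$ and combine \eqref{Fourierconv} with \eqref{SFFS}). Since $\id_\sB$ is a homomorphism, $S\tens\id_\sB$ is neither multiplicative nor anti-multiplicative on $\C(\GG)\tens\sB$ when $\sB$ is noncommutative, so this conjugation need not send unital $*$-homomorphisms to unital $*$-homomorphisms; your blanket equivalence ``$\alpha$ represents a quantum family of invertible maps iff $\hh{\;\!\!\hh{\alpha}}$ does'' is not valid in general.

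The repair is exactly the paper's argument: having shown (by the forward implication applied to $\hh{\alpha}$) that $\beta:=\hh{\;\!\!\hh{\alpha}}$ represents a quantum family of automorphisms of $\GG$, one does not try to pull properties abstractly through $S$-conjugation but instead observes that $\beta$ \emph{intertwines} $S$ --- it preserves both $*$ and $\bullet$, hence their composite $S$ (cf.\ the last line of the proof of Proposition~\ref{equivaut}). This gives $\hh{\!\hh{\beta}}=(S\tens\id)\comp\beta\comp S=\beta$, while the last assertion of Lemma~\ref{dualactionalg} gives $\hh{\!\hh{\beta}}=\alpha$; hence $\alpha=\beta$ and \eqref{corquantaut1} follows. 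So the identification $\hh{\;\!\!\hh{\alpha}}=\alpha$ in the converse direction is a consequence of $\beta$ being a quantum family of automorphisms, not a property of $S$-conjugation in the abstract.
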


\begin{proof}
Assume that \eqref{corquantaut1} holds. By Proposition \ref{equivaut} $\hh{\;\!\!\hh{\alpha}}=\alpha$ is a unital $*$-homomorphism satisfying Podle\'s condition, so in fact $\hh{\alpha}$ represents a quantum family of automorphisms of $\hh{\GG}$.

If \eqref{corquantaut2} holds, then by the same token as in the argument in the implication \eqref{corquantaut1}$\Rightarrow$\eqref{corquantaut2} the map $\beta=\hh{\;\!\!\hh{\alpha}}$ represents a quantum family of automorphisms of $\GG$; in particular $\hh{\!\hh{\beta}}=\beta$. But, by Lemma \ref{dualactionalg}, $\,\hh{\!\hh{\beta}}=\alpha$, which completes the proof.
\end{proof}

\begin{corollary}\label{Haarpreserv}
Let $\sB$ be a unital \cst-algebra and let $\alpha\colon\C(\GG)\to\C(\GG)\tens\sB$ represent a quantum family of automorphisms of $\GG$. Then $\alpha$ preserves the counit and the Haar state. Moreover,
\begin{equation}\label{displayed2}
\hh{\alpha}=\frac{1}{h(\eta)}(\hh{S}\comp\cF\tens\id_{\sB})\comp\alpha\comp\hh{\cF}.
\end{equation}
\end{corollary}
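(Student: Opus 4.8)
The plan is to establish the two assertions separately, drawing entirely on Lemma~\ref{dualactionalg} and Corollary~\ref{corquantaut}. For the first claim, I would argue as follows. By Corollary~\ref{corquantaut}, if $\alpha$ represents a quantum family of automorphisms of $\GG$, then $\hh{\alpha}$ represents a quantum family of automorphisms of $\hh{\GG}$; in particular $\hh{\alpha}$ preserves the Haar element $\hh{\eta}$ of $\hh{\GG}$, is $*$-preserving, unital, and preserves convolution. Now apply parts \eqref{dualactionalg3} and \eqref{dualactionalg4} of Lemma~\ref{dualactionalg} \emph{to the map $\hh{\alpha}$ in place of $\alpha$}, using the identity $\hh{\;\!\!\hh{\alpha}}=\alpha$ from Proposition~\ref{equivaut}. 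Specifically, item \eqref{dualactionalg3} says $\hh{\;\!\!\hh{\alpha}}$ is unital iff $\hh{\alpha}$ preserves the Haar element; since $\hh{\alpha}$ does preserve the Haar element, $\alpha=\hh{\;\!\!\hh{\alpha}}$ is unital---but more to the point, I instead want to read these items in the direction that tells me about $\alpha$ itself. The cleanest route: $\alpha$ itself preserves the convolution structure (by definition of a quantum family of automorphisms and Proposition~\ref{equivaut}), so in particular $\alpha$ preserves the convolution adjoint, and since $\alpha$ is also a $*$-homomorphism it preserves the antipode $S$ (by \eqref{convadj}, as noted in the proof of Proposition~\ref{equivaut}). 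Then I feed the pair $(\hh{\alpha},\GG\rightsquigarrow\hh{\GG})$ into Lemma~\ref{dualactionalg}: the statement ``$\hh{\alpha}$ preserves the counit'' is, via the lemma applied with roles of $\GG$ and $\hh{\GG}$ interchanged, equivalent to ``$\hh{\;\!\!\hh{\alpha}}=\alpha$ preserves the Haar state of $\GG$'', and ``$\hh{\alpha}$ preserves the Haar element $\hh{\eta}$'' is equivalent to ``$\alpha$ preserves the counit of $\GG$.'' Since $\hh{\alpha}$ is a unital $*$-homomorphism preserving the convolution structure of $\hh{\GG}$, both preservations on the $\hh{\GG}$ side hold, hence $\alpha$ preserves the counit and the Haar state of $\GG$.

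For formula \eqref{displayed2}, I would start from the defining formula \eqref{hatmap}, namely $\hh{\alpha}=\frac{1}{h(\eta)}(\cF\tens\id_{\sB})\comp\alpha\comp\hh{\cF}\comp\hh{S}$, and the goal is to move the $\hh{S}$ out of the argument and convert it into an $\hh{S}$ acting on the left, i.e.\ to show $(\cF\tens\id)\comp\alpha\comp\hh{\cF}\comp\hh{S}=(\hh{S}\comp\cF\tens\id)\comp\alpha\comp\hh{\cF}$. Equivalently, I must verify that $(\cF\tens\id)\comp\alpha\comp\hh{\cF}$ intertwines $\hh{S}$ on the source with $\hh{S}\tens\id$ on the target. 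The natural way is to use \eqref{SFFS} (and its analogue $\hh{\hh{S}}\comp\hh{\cF}=\hh{\cF}\comp\hh{S}$ on the dual), together with the fact that $\alpha$ preserves the antipode $S$ of $\GG$, which I established above. Concretely: $\hh{S}\comp\cF=\cF\comp S$ by \eqref{SFFS}, so $(\hh{S}\comp\cF\tens\id)\comp\alpha=(\cF\comp S\tens\id)\comp\alpha=(\cF\tens\id)\comp(S\tens\id)\comp\alpha=(\cF\tens\id)\comp\alpha\comp S$, using that $\alpha$ commutes with the antipode in the sense $(S\tens\id_\sB)\comp\alpha=\alpha\comp S$. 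Then $(\cF\tens\id)\comp\alpha\comp S\comp\hh{\cF}$; and one checks, again from Lemma~\ref{Fouriter} and \eqref{SFFS}, that $S\comp\hh{\cF}=\hh{\cF}\comp\hh{S}$ as maps on $\C(\hh{\GG})^*$---indeed $\hh{\cF}\comp\hh{S}$ is, up to scaling, $\cF^{-1}$ by Lemma~\ref{Fouriter} applied to $\hh{\GG}$, and $S\comp\hh{\cF}$ transforms the same way. Assembling these gives $(\hh{S}\comp\cF\tens\id)\comp\alpha\comp\hh{\cF}=(\cF\tens\id)\comp\alpha\comp\hh{\cF}\comp\hh{S}$, and dividing by $h(\eta)$ yields \eqref{displayed2}.

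Alternatively, and perhaps more transparently, formula \eqref{displayed2} follows from combining \eqref{hatmap} with the ``inner'' expression $\hh{\alpha}=(\cF\tens\id)\comp\alpha\comp\cF^{-1}$ of Proposition~\ref{alhatInne}: setting the two equal gives $\frac{1}{h(\eta)}(\cF\tens\id)\comp\alpha\comp\hh{\cF}\comp\hh{S}=(\cF\tens\id)\comp\alpha\comp\cF^{-1}$, and by Lemma~\ref{Fouriter} applied to the dual, $\hh{\cF}\comp\hh{S}=h(\eta)\cF^{-1}$ (using $\hh{h}(\hh{\eta})=h(\eta)$), which is already consistent; one then needs only the further identity $\cF^{-1}\comp\hh{S}^{-1}$-type manipulation, or directly that $\hh{S}\comp\cF\comp\cF^{-1}\comp(\text{something})$. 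I expect the main obstacle to be purely bookkeeping: keeping straight which copy of the Fourier transform and antipode ($\cF$ vs.\ $\hh{\cF}$, $S$ vs.\ $\hh{S}$) acts where, and correctly tracking the scalar factors $h(\eta)$ and $\hh{h}(\hh{\eta})$ (which are equal, a fact already recorded in the excerpt). There is no conceptual difficulty once one has Lemma~\ref{dualactionalg}, Proposition~\ref{alhatInne} and Lemma~\ref{Fouriter} in hand; the subtlety is only to apply the first of these with the roles of $\GG$ and $\hh{\GG}$ exchanged, which is legitimate because the whole setup of Section~\ref{FourierSection} is symmetric under $\GG\leftrightarrow\hh{\GG}$ up to the involutivity statement \eqref{displayed}.
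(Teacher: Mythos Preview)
Your overall strategy matches the paper's, but there is a real gap in the argument for the counit and Haar state. You assert that ``$\hh{\alpha}$ preserves the Haar element $\hh{\eta}$'' is equivalent to ``$\alpha$ preserves the counit of $\GG$''; this is false. Lemma~\ref{dualactionalg}\eqref{dualactionalg3}, applied with the roles of $\GG$ and $\hh{\GG}$ interchanged, says only that $\hh{\alpha}$ preserves $\hh{\eta}$ iff $\hh{\;\!\!\hh{\alpha}}=\alpha$ is \emph{unital}, which we already knew. To obtain counit preservation of $\alpha$ via Lemma~\ref{dualactionalg}\eqref{dualactionalg4} you would need $\hh{\alpha}$ to preserve the Haar \emph{state} $\hh{h}$, not the Haar element; and that is not part of the definition of a quantum family of automorphisms---it is precisely the statement to be proved, transported to the dual side. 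Likewise your claim that ``both preservations on the $\hh{\GG}$ side hold'' is unjustified: nothing in ``unital $*$-homomorphism preserving the convolution structure'' gives you that $\hh{\alpha}$ preserves $\hh{\epsilon}$. The duality alone does not close the loop.

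The paper fills this gap with a short direct computation you have omitted: from $\alpha(\eta)=\eta\tens\I_\sB$ and multiplicativity of $\alpha$, using the defining property $a\eta=\epsilon(a)\eta$, one gets
\[
\eta\tens\epsilon(a)\I_\sB=\alpha(a\eta)=\alpha(a)(\eta\tens\I_\sB)=\eta\tens\bigl((\epsilon\tens\id_\sB)\alpha(a)\bigr),
\]
hence $\alpha$ preserves $\epsilon$. The same computation applied to $\hh{\alpha}$ (which preserves $\hh{\eta}$ by Corollary~\ref{corquantaut}) gives that $\hh{\alpha}$ preserves $\hh{\epsilon}$, and \emph{then} Lemma~\ref{dualactionalg}\eqref{dualactionalg4} in its swapped form yields that $\alpha=\hh{\;\!\!\hh{\alpha}}$ preserves $h$. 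Your treatment of formula~\eqref{displayed2} is fine and agrees with the paper: it reduces to the fact that $(S\tens\id_\sB)\comp\alpha=\alpha\comp S$, which you correctly extract from the simultaneous preservation of $*$ and $\bullet$; the paper records this in a single sentence.
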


\begin{proof}
Observe  that the equality $\alpha(\eta)=\eta\tens\I_{\sB}$ implies that $\alpha$ preserves the counit. Indeed, if $a\in\C(\GG)$ then
\[
\begin{split}
\eta\tens\epsilon(a)\I_\sB&=\alpha\bigl(\epsilon(a)\eta\bigr)=\alpha(a\eta)= \alpha(a)\alpha(\eta)=\alpha(a)(\eta\tens\I_{\sB})\\
&=\bigl(\I_{\C(\GG)}\tens((\epsilon\tens\id_{\sB})\alpha(a))\bigr)(\eta\tens\I_{\sB})=\eta\tens\bigl((\epsilon\tens\id_{\sB})\alpha(a)\bigr),
\end{split}
\]
and as $\eta\neq{0}$ we see that $\alpha$ preserves the counit. From Corollary \ref{corquantaut} we deduce that $\hh{\alpha}$ preserves the counit of $\hh{\GG}$. But then a combination of Lemma \ref{dualactionalg}\eqref{dualactionalg4} and the equality $\hh{\;\!\!\hh{\alpha}}=\alpha$ show that $\alpha$ preserves the Haar state.

Formula \eqref{displayed2} follows from the fact that $\alpha$ preserves the antipode.
\end{proof}


\begin{proposition} \label{classquantaut}
Let $\alpha\colon\C(\GG)\to\C(\GG)\tens\sB$ represent a quantum family of automorphisms of $\GG$ and assume that $\sB$ is commutative and $X$ is a compact space such that $\sB=\C(X)$. Then there is a family of Hopf $*$-algebra automorphisms $\{\psi_x\}_{x\in{X}}$ of $\C(\GG)$ such that for any $x\in{X}$, $a \in \C(\GG)$,
\[
(\id\tens\xi_x)\alpha(a)=\psi_x(a),
\]
where $\xi_x$ is the evaluation map $\sB\ni{f}\mapsto{f(x)}\in\CC$. Moreover,  for a fixed $a\in \C(\GG)$ the elements $\psi_x(a)$ depend continuously on $x$.

If, in addition, $\GG$ is a classical finite group then each $\psi_x$ is an automorphism of $\GG$.
\end{proposition}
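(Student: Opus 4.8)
The plan is to set $\psi_x:=(\id\tens\xi_x)\comp\alpha\colon\C(\GG)\to\C(\GG)$ for each $x\in X$ and to verify all the claimed properties. Since $\sB=\C(X)$ and $\C(\GG)$ is finite dimensional, we may canonically identify $\C(\GG)\tens\sB$ with $\C\bigl(X,\C(\GG)\bigr)$, and under this identification $\id\tens\xi_x$ becomes evaluation at $x$; hence $\psi_x(a)=\alpha(a)(x)$, which makes the continuous dependence of $\psi_x(a)$ on $x$ immediate and gives $(\id\tens\xi_x)\alpha(a)=\psi_x(a)$ by construction. Each $\xi_x$ is a unital $*$-homomorphism $\sB\to\CC$, so $\id\tens\xi_x$ is a unital $*$-homomorphism $\C(\GG)\tens\sB\to\C(\GG)$, and composing it with the unital $*$-homomorphism $\alpha$ shows that each $\psi_x$ is a unital $*$-homomorphism of the \cst-algebra $\C(\GG)$.

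Next I would check that $\psi_x$ is bijective. By Corollary \ref{Haarpreserv} the map $\alpha$ preserves the Haar state, so $h\comp\psi_x=h$; since $h$ is faithful and $\C(\GG)$ is finite dimensional, $\psi_x(a)=0$ forces $h(a^*a)=h\bigl(\psi_x(a)^*\psi_x(a)\bigr)=0$ and thus $a=0$, so $\psi_x$ is injective, hence a $*$-automorphism of $\C(\GG)$. The same corollary gives $\epsilon\comp\psi_x=\epsilon$. It remains to prove that $\psi_x$ intertwines the coproduct, and for this I would pass to the dual. By Proposition \ref{alhatInne} we have $\hh{\alpha}=(\cF\tens\id)\comp\alpha\comp\cF^{-1}$, whence $\hh{\psi}_x:=(\id\tens\xi_x)\comp\hh{\alpha}=\cF\comp\psi_x\comp\cF^{-1}$. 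Using $\cF(b)=h(\,\cdot\,b)$, the traciality of $h$ and its $\psi_x$-invariance (so $h\bigl(\psi_x^{-1}(a)b\bigr)=h\bigl(\psi_x(\psi_x^{-1}(a)b)\bigr)=h\bigl(a\,\psi_x(b)\bigr)$), one obtains for all $a,b\in\C(\GG)$
\[
\hh{\psi}_x\bigl(\cF(b)\bigr)(a)=h\bigl(a\,\psi_x(b)\bigr)=h\bigl(\psi_x^{-1}(a)\,b\bigr)=\cF(b)\bigl(\psi_x^{-1}(a)\bigr),
\]
so $\hh{\psi}_x$ is the transpose of $\psi_x^{-1}$ under the duality between $\C(\hh{\GG})=\C(\GG)^*$ and $\C(\GG)$. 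Now $\hh{\alpha}$ represents a quantum family of automorphisms of $\hh{\GG}$ by Corollary \ref{corquantaut}, so $\hh{\psi}_x$ is multiplicative on $\C(\hh{\GG})$; since the product on $\C(\hh{\GG})$ is dual to $\Delta$, i.e.~$(\omega\tau)(a)=(\omega\tens\tau)\Delta(a)$, multiplicativity of $\hh{\psi}_x$ combined with the transpose identity above translates into $\Delta\bigl(\psi_x^{-1}(a)\bigr)=(\psi_x^{-1}\tens\psi_x^{-1})\Delta(a)$ for all $a$, and applying $\psi_x\tens\psi_x$ and replacing $a$ by $\psi_x(a)$ yields $\Delta\comp\psi_x=(\psi_x\tens\psi_x)\comp\Delta$. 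Together with bijectivity and preservation of $*$, $\epsilon$ and the antipode (the last, e.g., because $\psi_x$ preserves both $*$ and the convolution adjoint $\bullet$), this shows that every $\psi_x$ is a Hopf $*$-algebra automorphism of $\C(\GG)$.

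Finally, suppose $\GG$ is a classical finite group $G$, so that $\C(\GG)=\Fun(G)$. A unital $*$-automorphism of $\Fun(G)$ permutes the minimal projections $\{\delta_g\}_{g\in G}$, so $\psi_x(\delta_g)=\delta_{\phi(g)}$ for a bijection $\phi\colon G\to G$; comparing $\Delta(\delta_{\phi(g)})=\sum_{ab=\phi(g)}\delta_a\tens\delta_b$ with $(\psi_x\tens\psi_x)\Delta(\delta_g)=\sum_{ab=g}\delta_{\phi(a)}\tens\delta_{\phi(b)}$ and invoking the intertwining of $\Delta$ already established forces $\phi(a)\phi(b)=\phi(ab)$ for all $a,b\in G$, i.e.~$\phi$ is a group automorphism, so $\psi_x$ is an automorphism of $\GG$.

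The main obstacle is the coproduct-preservation step: a quantum family of automorphisms is by definition required only to respect the \emph{convolution} structure, not the coproduct, so coproduct preservation of each $\psi_x$ genuinely must be extracted from the duality. The crucial point is the identification of $\hh{\psi}_x$ with the transpose of $\psi_x^{-1}$, which uses both the bijectivity of $\psi_x$ and the $\psi_x$-invariance of the tracial Haar state, followed by the observation that multiplicativity of $\hh{\psi}_x$ is precisely dual to $\psi_x$ being a coalgebra map; everything else is either formal or already contained in Section \ref{Secdefautomor}.
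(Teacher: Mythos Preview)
Your proof is correct. The initial steps (definition of $\psi_x$, continuity, unital $*$-homomorphism, bijectivity via faithfulness of $h$, preservation of $\epsilon$ and $S$) agree with the paper. The difference lies in how you obtain $\Delta\comp\psi_x=(\psi_x\tens\psi_x)\comp\Delta$.

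The paper argues directly from the convolution formula: starting from $a\star b=\psi_x^{-1}\bigl(\psi_x(a)\star\psi_x(b)\bigr)$, it expands both sides using the explicit expression $a\star b=(h\tens\id)\bigl(((S\tens\id)\Delta(b))(a\tens\I)\bigr)$, inserts $h=h\comp\psi_x^{-1}$, uses that $\psi_x^{-1}$ commutes with $S$, and then invokes faithfulness of $h$ to strip the functionals and obtain $(\psi_x^{-1}\tens\psi_x^{-1})\comp\Delta\comp\psi_x=\Delta$. You instead go through the dual: you show $\hh{\psi}_x=\cF\comp\psi_x\comp\cF^{-1}$ equals the transpose of $\psi_x^{-1}$ (using $h\comp\psi_x=h$ and multiplicativity of $\psi_x$), and then read off coproduct preservation from the fact that multiplicativity of a transpose map on $\C(\hh{\GG})$ is precisely comultiplicativity of the original map on $\C(\GG)$. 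Your route is more conceptual and fits nicely with the duality philosophy underlying the whole paper; the paper's route is more self-contained in that it does not appeal to Corollary~\ref{corquantaut} and works entirely at the level of the convolution formula. Both arguments ultimately hinge on the same two ingredients: $\psi_x$ preserves the convolution product and $h\comp\psi_x=h$.
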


\begin{proof}
If $\sB=\C(X)$ then for each $x\in{X}$ we define $\psi_x=(\id\tens\xi_x)\comp\alpha$. Then $\psi_x\colon\C(\GG)\to\C(\GG)$ is a unital $*$-homomorphism and it is a standard fact that the map $x\mapsto\psi_x(a)$ is continuous for any $a\in\C(\GG)$.

Moreover, $\psi_x$ is an automorphism of $\C(\GG)$. Indeed, if $\psi_x(a)=0$ for some $a\in\C(\GG)$ then $\psi_x(a^*a)=0$. Then $h(a^*a)=h\bigl(\psi_x(a^*a)\bigr)=0$ because $\alpha$ preserves the Haar measure of $\GG$, i.e.
\[
(h\tens\id)\alpha(b)=h(b)\I,\qquad(b\in\C(\GG)).
\]
As $h$ is faithful, we see that $a=0$. This shows that $\ker\psi_x=\{0\}$, and so $\psi_x$ is a linear automorphism of the finite dimensional vector space $\C(\GG)$.

The map $\psi_x$ is also a $*$-homomorphism for the convolution product and convolution adjoint on $\C(\GG)$. This shows that $\psi_x\comp{S}=S\comp\psi_x$ (cf.~formula \eqref{convadj}).

Now let us see that $\psi_x$ preserves the comultiplication. For any $a,b\in\C(\GG)$ we have
\[
a\star{b}=\psi_x^{-1}\bigl(\psi_x(a)\star\psi_x(b)\bigr).
\]
Expanding this according to the definition of the convolution product yields
\begin{equation}\label{convconv}
(h\tens\id)\Bigl(\bigl((S\tens\id)\Delta(b)\bigr)(a\tens\I)\Bigr)=
(h\tens\psi_x^{-1})\Bigl(\bigl((S\tens\id)\Delta(\psi_x(b))\bigr)\bigl(\psi_x(a)\tens\I\bigr)\Bigr).
\end{equation}
Now since $h\comp\psi_x=h$ we can substitute $h\comp\psi_x^{-1}$ for $h$ on the right hand side and rewrite \eqref{convconv} as
\[
\begin{split}
(h\tens\id)\Bigl(\bigl((S\tens\id)\Delta(b)\bigr)(a\tens\I)\Bigr)
&=(h\tens\id)(\psi_x^{-1}\tens\psi_x^{-1})\Bigl(\bigl((S\tens\id)\Delta(\psi_x(b))\bigr)\bigl(\psi_x(a)\tens\I\bigr)\Bigr)\\
&=(h\tens\id)\Bigl(\bigl((S\tens\id)(\psi_x^{-1}\tens\psi_x^{-1})\Delta(\psi_x(b))\bigr)(a\tens\I)\Bigr),
\end{split}
\]
as $\psi_x^{-1}$ commutes with $S$. We arrive at
\[
\bigl((ah\comp{S})\tens\id\bigr)\Bigl((\psi_x^{-1}\tens\psi_x^{-1})\Delta\bigl(\psi_x(b)\bigr)\Bigr)
=\bigl((ah\comp{S})\tens\id\bigr)\bigl(\Delta\bigl(b)\bigr).
\]
Taking into account the fact that $S$ is a linear automorphism of $\C(\GG)$ and faithfulness of $h$, we see that as $a$ varies over $\C(\GG)$ the functionals $ah\comp{S}$ fill the whole space $\C(\GG)^*$. This immediately implies that
\[
(\psi_x^{-1}\tens\psi_x^{-1})\comp\Delta\comp\psi_x=\Delta,
\]
so that $\psi_x$ is a Hopf algebra automorphism of $\C(\GG)$.
\end{proof}

\begin{definition}
Let $\GG$ be a finite quantum group and let $\QAUT(\GG)$ denote the category of quantum families of automorphisms of $\GG$: its objects are pairs $(\sB, \alpha)$, where $\sB$ is a unital \cst-algebra and $\alpha\colon\C(\GG)\to\C(\GG)\tens\sB$ represents a quantum family of automorphisms of $\GG$ (understood as in Definition \ref{defquantaut}) and a morphism from $(\sB, \alpha)$ to $(\sB', \alpha')$  is defined as a unital $*$-homomorphism from $\sB'$ to $\sB$ intertwining $\alpha'$ and $\alpha$ -- note the `inversion of arrows' representing the fact that we think of $\sB$ as the algebra of continuous functions on a `quantum space'.
\end{definition}

In the next lemma we will use the notion of composition of quantum families of maps introduced in \cite[Section 3]{qs}. We will only need this notion in the context of quantum families of maps $\GG\to\GG$: let $\sB$ and $\sC$ be \cst-algebras and let $\beta\colon\C(\GG)\to\C(\GG)\tens\sB$ and $\gamma\colon\C(\GG)\to\C(\GG)\tens\sC$ represent quantum families of maps $\GG\to\GG$. The \emph{composition} of $\beta$ and $\gamma$ is by definition the quantum family of maps $\GG\to\GG$ represented by
\[
(\beta\tens\id)\comp\gamma\colon\C(\GG)\to\C(\GG)\tens\sB\tens\sC=\C(\GG)\tens(\sB\tens\sC).
\]
We will denote the composition of $\beta$ and $\gamma$ by the symbol $\beta\vt\gamma$.

\begin{lemma}\label{compoAut}
Let $\sB$ and $\sC$ be \cst-algebras and let $\beta\colon\C(\GG)\to\C(\GG)\tens\sB$ and $\gamma\colon\C(\GG)\to\C(\GG)\tens\sC$ represent quantum families of automorphisms of $\GG$. Then the composition $\beta\vt\gamma\colon\C(\GG)\to\C(\GG)\tens(\sB\tens\sC)$ represents a quantum family of automorphisms of $\GG$.
\end{lemma}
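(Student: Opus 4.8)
The plan is to verify directly that $\beta\vt\gamma=(\beta\tens\id_\sC)\comp\gamma$ represents a quantum family of invertible maps on $\GG$ which, in addition, preserves the convolution product, the convolution adjoint and the Haar element; by Proposition~\ref{equivaut} together with Definition~\ref{defquantaut} this is exactly what must be shown. First, $\beta\vt\gamma$ is a unital $*$-homomorphism: $\gamma$ is one by hypothesis, and $\beta\tens\id_\sC$ is a unital $*$-homomorphism from $\C(\GG)\tens\sC$ into $\C(\GG)\tens\sB\tens\sC=\C(\GG)\tens(\sB\tens\sC)$, so the composite is too.

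Next I would establish the Podle\'s condition for $\beta\vt\gamma$. One may simply invoke \cite[Section~3]{qs}, where the composition of two quantum families of invertible maps is shown to be again such; alternatively, using the finite-dimensional reformulation of the Podle\'s condition recalled just after Definition~\ref{defquantaut}, note that for $\omega\in\sC^*$ one has $(\id\tens\id_\sB\tens\omega)\comp(\beta\vt\gamma)=\beta\comp(\id\tens\omega)\comp\gamma$. As $a$ runs over $\C(\GG)$ and $\omega$ over $\sC^*$ the elements $(\id\tens\omega)(\gamma(a))$ span $\C(\GG)$ by the Podle\'s condition for $\gamma$; feeding these into $\beta$, taking a further slice $\id\tens\tau$ with $\tau\in\sB^*$, and using the Podle\'s condition for $\beta$, one sees that the slices of $\beta\vt\gamma$ span $\C(\GG)$. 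Since $\C(\GG)$ is finite dimensional all the tensors occurring are finite sums of simple tensors, so nothing goes wrong when $\sB,\sC$ are infinite dimensional.

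It then remains to check that $\beta\vt\gamma$ preserves the convolution structure, and the point is that $\beta\tens\id_\sC$ transports everything: because $\beta$ preserves the convolution product, the convolution adjoint and the Haar element, $\beta\tens\id_\sC$ is a homomorphism from $\C(\GG)\tens\sC$ (convolution on the first leg, the given product on the second) into $\C(\GG)\tens\sB\tens\sC$ (convolution on the first leg, the given products on the others), it intertwines $\bullet\tens*$ with $\bullet\tens*\tens*$, and it sends $\eta\tens\I_\sC$ to $\eta\tens\I_{\sB\tens\sC}$. Precomposing with $\gamma$, which preserves the convolution structure of $\GG$, then yields the three required identities for $\beta\vt\gamma$ -- for instance $(\beta\vt\gamma)(\eta)=(\beta\tens\id_\sC)(\eta\tens\I_\sC)=\eta\tens\I_{\sB\tens\sC}$. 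Proposition~\ref{equivaut} now completes the argument.

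I expect the only fiddly point to be the bookkeeping of the flip maps in the convolution-product identity, which can be organised exactly as in the proof of Lemma~\ref{dualactionalg}\eqref{dualactionalg1} and is otherwise entirely routine. If one wishes to sidestep it, the third step above can instead be handled through the dual: Proposition~\ref{alhatInne} gives $\widehat{\beta\vt\gamma}=\hh{\beta}\vt\hh{\gamma}$ by a one-line computation, and since $\hh{\beta}$ and $\hh{\gamma}$ represent quantum families of automorphisms -- in particular of invertible maps -- of $\hh{\GG}$ by Corollary~\ref{corquantaut}, applying the unital-$*$-homomorphism and Podle\'s parts of the argument to $\hh{\GG}$ shows that $\widehat{\beta\vt\gamma}$ represents a quantum family of invertible maps on $\hh{\GG}$; Proposition~\ref{equivaut} (in the direction (2)$\Rightarrow$(1)) then gives the conclusion.
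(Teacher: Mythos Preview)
Your proposal is correct and follows essentially the same route as the paper: a direct verification that $\beta\vt\gamma$ is a unital $*$-homomorphism satisfying the Podle\'s condition and preserving the convolution structure. The paper is terser, citing \cite{qinvert} for the Podle\'s condition under composition and \cite[Proposition~14]{qs} for invariance of states, whereas you supply a self-contained slice argument for Podle\'s and an additional alternative via the identity $\widehat{\beta\vt\gamma}=\hh{\beta}\vt\hh{\gamma}$; both are welcome but not substantively different from the paper's approach.
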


\begin{proof}
The fact that a given $*$-homomorphism represents a quantum family of automorphisms of $\C(\GG)$ means that it
\begin{itemize}
\item preserves the Haar measure of $\GG$,
\item is a homomorphism with respect to the convolution product,
\item is a $*$-map for the convolution adjoint on $\C(\GG)$,
\item satisfies the Podle\'s condition.
\end{itemize}

If $\beta\colon\C(\GG)\to\C(\GG)\tens\sB$ and $\gamma\colon\C(\GG)\to\C(\GG)\tens\sC$ satisfy the first three of the above three requirements then one can show by quite trivial direct computation that so does $\beta\vt\gamma$.\footnote{
For example take $a,b\in\C(\GG)$ and let $\gamma(a)=\sum\limits_{i}a_i\tens{x_i}$, $\gamma(b)=\sum\limits_{j}b_j\tens{y_j}$ and $\beta(a_i\star{b_j})=\sum\limits_{k}c^{ij}_k\tens{z_k}$. We have
\[
(\beta\vt\gamma)(a\star{b})=(\beta\tens\id)\gamma(a\star{b})
=(\beta\tens\id)\gamma\biggl(\sum_{i,j}a_i\star{b_j}\tens{x_iy_j}\biggr)
=\sum_{i,j}\beta(a_i\star{b_j})\tens{x_iy_j}.
\]
Further, for each $i$ and $j$ let $\beta(a_i)=\sum\limits_pu^i_p\tens{w_p}$ and $\beta(b_j)=\sum\limits_qv^j_q\tens{r_q}$. As the map $\beta$ represents a quantum family of automorphisms of $\GG$, we have $\sum\limits_{k}c^{ij}_k\tens{z_k}=\sum\limits_{p,q}u^i_p\star{v^j_q}\tens{w_pr_q}$ for each $i,j$. Now denoting by $\mu$ and $m$ the convolution multiplication on $\C(\GG)$ and the multiplication on $\sB\tens\sC$ and by $\sigma$ the flip $\sB\tens\sC\tens\C(\GG)\to\C(\GG)\tens\sB\tens\sC$ we compute
\[
\begin{split}
(\mu\tens{m})(\id\tens\sigma\tens\id)&\bigl((\beta\vt\gamma)(a)\tens(\beta\vt\gamma)(b)\bigr)
=\sum\limits_{i,p,j,q}u^i_p\star{v^j_q}\tens{w_pr_q}\tens{x_iy_j}\\
&=\sum\limits_{i,j}\biggl(\sum_{p,q}u^i_p\star{v^j_q}\tens{w_pr_q}\biggr)\tens{x_iy_j}
=\sum\limits_{i,j,k}c^{ij}_k\tens{z_k}\tens{x_iy_j}=(\beta\vt\gamma)(a\star{b}).
\end{split}
\]
}
However even those simple computations can be avoided when we realize that
\begin{itemize}
\item invariance of a given state is preserved under composition of quantum families of maps (\cite[Proposition 14]{qs}),
\item $\beta\vt\gamma$ is a $*$-homomorphism for the convolution multiplication and convolution adjoint on $\C(\GG)$ by definition of the composition of quantum families.
\end{itemize}

Finally, one can show that composition of quantum families satisfying Podle\'s condition also satisfies Podle\'s condition (\cite{qinvert}). In particular $\beta\vt\gamma$ satisfies the Podle\'s condition, and so it represents a quantum family of automorphisms of $\C(\GG)$.
\end{proof}

We introduce one more piece of terminology.

\begin{definition}\sloppy
Let $\HH$ be a compact quantum group, $\GG$ a finite quantum group and $\alpha\colon\C(\GG)\to\C(\GG)\tens\C(\HH)$ be an action of $\HH$ on $\C(\GG)$: recall that this means that $\alpha$ represents a quantum family of invertible maps on $\GG$ and the \emph{action equation} holds:
\[(\id_{\C(\GG)} \tens\Delta_{\HH}) \comp\alpha  = (\alpha \tens\id_{\HH}) \comp\alpha.  \]
If in addition $\alpha$ represents a quantum family of automorphisms of $\GG$ we say that $\alpha$ is an action of $\HH$ on $\GG$ by (quantum) automorphisms.
\end{definition}

The next theorem is the key existence result of the note.

\begin{theorem} \label{existquantaut}
Let $\GG$ be a finite quantum group. The category $\QAUT(\GG)$ admits a (necessarily unique up to an isomorphism) final object, $(\sB_u, \alpha_u)$. Moreover the algebra $\sB_u$ admits a unique structure of the algebra of continuous functions on a compact quantum group (to be denoted $\qAut(\GG)$) such that $\alpha_u$ defines an action of $\qAut(\GG)$ on $\GG$. The quantum group $\qAut(\GG)$ will be called the quantum automorphism group of $\GG$; it is a quantum subgroup of the Wang's quantum automorphism group of $\bigl(\C(\GG),h\bigr)$.
\end{theorem}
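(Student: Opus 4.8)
The plan is to realise $\C(\qAut(\GG))$ as a quotient of the algebra of Wang's quantum automorphism group of $\bigl(\C(\GG),h\bigr)$ and to transport the compact quantum group structure along the quotient. Fix a linear basis $(e_1,\dots,e_d)$ of $\C(\GG)$ and let $\sS$ denote the algebra of continuous functions on Wang's quantum automorphism group of $\bigl(\C(\GG),h\bigr)$, with $\beta\colon\C(\GG)\to\C(\GG)\tens\sS$ the associated action. By \cite{Wang} (see also \cite{qs}, \cite{qinvert}), $(\sS,\beta)$ is a final object in the category of pairs $(\sB,\gamma)$ for which $\gamma\colon\C(\GG)\to\C(\GG)\tens\sB$ is a quantum family of invertible maps on $\GG$ preserving $h$; moreover $\sS$ is generated as a \cst-algebra by the matrix coefficients $u_{ij}$ of $\beta$ (so $\beta(e_j)=\sum_i e_i\tens u_{ij}$), and $\sS$ carries a coproduct $\Delta_\sS$ turning $\beta$ into an action. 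Let $J\subseteq\sS$ be the closed two-sided (hence $*$-closed) ideal generated by the elements of $\sS$ that arise, upon expansion in the basis $(e_i)$ of the first tensor leg, from the differences
\[
\beta(\eta)-\eta\tens\I,\qquad
\beta(e_i\star e_j)-(\mu\tens m)\comp(\id\tens\sigma\tens\id)\bigl(\beta(e_i)\tens\beta(e_j)\bigr),\qquad
\beta(e_i^\bullet)-(\bullet\tens*)\bigl(\beta(e_i)\bigr)
\]
($1\le i,j\le d$), where $\mu$, $m$, $\sigma$ are as in the definition preceding Lemma \ref{dualactionalg}. Put $\sB_u=\sS/J$, let $\pi_J\colon\sS\to\sB_u$ be the quotient map, and set $\alpha_u=(\id\tens\pi_J)\comp\beta$. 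Then $\alpha_u$ is a unital $*$-homomorphism, it satisfies the Podle\'s condition (being the image of $\beta$ under $\id\tens\pi_J$), and it preserves the convolution product, the convolution adjoint and the Haar element because $\pi_J$ kills the chosen generators of $J$ and is multiplicative and $*$-preserving; hence $(\sB_u,\alpha_u)\in\QAUT(\GG)$. It is final: for any $(\sB,\alpha)\in\QAUT(\GG)$, Corollary \ref{Haarpreserv} shows $\alpha$ preserves $h$, so the universal property of $(\sS,\beta)$ yields a unique unital $*$-homomorphism $\rho\colon\sS\to\sB$ with $(\id\tens\rho)\comp\beta=\alpha$; applying $\id\tens\rho$ to the three displayed differences turns them into the corresponding differences for $\alpha$, which vanish, so $J\subseteq\ker\rho$ and $\rho$ factors uniquely as $\bar\rho\comp\pi_J$ with $(\id\tens\bar\rho)\comp\alpha_u=\alpha$. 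Thus $(\sB_u,\alpha_u)$ is a final object of $\QAUT(\GG)$ (necessarily unique up to isomorphism).

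The compact quantum group structure is produced by the standard universality argument, using that $\QAUT(\GG)$ is closed under the composition $\vt$. By Lemma \ref{compoAut}, $(\sB_u\tens\sB_u,\alpha_u\vt\alpha_u)\in\QAUT(\GG)$, so finality gives a unique unital $*$-homomorphism $\Delta_u\colon\sB_u\to\sB_u\tens\sB_u$ with $(\id\tens\Delta_u)\comp\alpha_u=\alpha_u\vt\alpha_u=(\alpha_u\tens\id)\comp\alpha_u$ -- precisely the action equation for $\alpha_u$; likewise the trivial family $(\CC,\,a\mapsto a\tens\I)\in\QAUT(\GG)$ yields a unique character $\epsilon_u\colon\sB_u\to\CC$ with $(\id\tens\epsilon_u)\comp\alpha_u=\id_{\C(\GG)}$. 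Since $\sB_u$ is generated as a \cst-algebra by the coefficients $\pi_J(u_{ij})$ of $\alpha_u$, any unital $*$-homomorphism $\phi$ out of $\sB_u$ is determined by $(\id\tens\phi)\comp\alpha_u$; fed through $\alpha_u$ in this sense, both $(\Delta_u\tens\id)\comp\Delta_u$ and $(\id\tens\Delta_u)\comp\Delta_u$ reproduce the triple composition $\alpha_u\vt\alpha_u\vt\alpha_u$ (by the action equation), while $(\epsilon_u\tens\id)\comp\Delta_u$, $(\id\tens\epsilon_u)\comp\Delta_u$ and $\id_{\sB_u}$ all reproduce $\alpha_u$; hence $\Delta_u$ is coassociative and $\epsilon_u$ is a counit for it. The structure is unique, as any coproduct turning $\alpha_u$ into an action satisfies $(\id\tens\Delta)\comp\alpha_u=\alpha_u\vt\alpha_u$ and so equals $\Delta_u$ by finality.

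What remains -- and this is the one point I do not expect to be formally forced by finality together with Lemma \ref{compoAut} -- is to verify the density (cancellation) conditions of Woronowicz's definition and to identify $\qAut(\GG)$ inside Wang's quantum automorphism group; it is here that one genuinely uses that $\alpha_u$ represents a quantum family of \emph{invertible} maps. The cleanest route is to show that $\pi_J$ is a morphism of compact quantum groups: both $\Delta_u\comp\pi_J$ and $(\pi_J\tens\pi_J)\comp\Delta_\sS$ -- call a generic one $\phi$ -- are unital $*$-homomorphisms $\sS\to\sB_u\tens\sB_u$ satisfying $(\id\tens\phi)\comp\beta=\alpha_u\vt\alpha_u$ (for the second, via the action equation for $\beta$), so they coincide by the uniqueness in the universal property of $(\sS,\beta)$. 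Consequently $\Delta_u(\sB_u)(\I\tens\sB_u)$ contains $(\pi_J\tens\pi_J)\bigl(\Delta_\sS(\sS)(\I\tens\sS)\bigr)$, which is linearly dense in $\sB_u\tens\sB_u$ because $\pi_J\tens\pi_J$ is a surjective $*$-homomorphism and $\sS$ satisfies the cancellation laws; the same argument applies on the other side. Hence $(\sB_u,\Delta_u)$ is a compact quantum group, which we name $\qAut(\GG)$; the surjection $\pi_J$ then exhibits it as a quantum subgroup of Wang's quantum automorphism group of $\bigl(\C(\GG),h\bigr)$, and $\alpha_u$ is an action of $\qAut(\GG)$ on $\GG$. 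Alternatively, the needed density can be quoted directly from \cite{Wang}, \cite{qs}, \cite{qinvert}, whose arguments apply once one knows that $\QAUT(\GG)$ possesses a final object closed under $\vt$.
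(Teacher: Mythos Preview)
Your proof is correct and follows essentially the same route as the paper's: quotient Wang's quantum automorphism group of $(\C(\GG),h)$ by the ideal encoding the convolution relations, check that the quotient sits in $\QAUT(\GG)$ and is final there via Wang's universal property (using Corollary~\ref{Haarpreserv} to get $h$-invariance), and then obtain the coproduct from Lemma~\ref{compoAut} with the density conditions inherited along the surjection. The only cosmetic difference is that you generate the ideal using preservation of the Haar element $\eta$, whereas the paper uses preservation of the counit $\epsilon$; your choice matches Definition~\ref{defquantaut} more directly and avoids the small extra step of deducing $\eta$-preservation from $\epsilon$-preservation.
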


\begin{proof}
Let $\KK$ be Wang's quantum automorphism group of $(\C(\GG),h)$  -- see \cite{Wang} -- and let $\bbeta\colon\C(\GG)\to\C(\GG)\tens\C(\KK)$ be its action on $\GG$. The universal property of $(\KK,\bbeta)$ says that for any unital \cst-algebra $\sB$ and any unital $*$-homomorphism $\alpha\colon\C(\GG)\to\C(\GG)\tens\sB$ preserving $h$ and satisfying the Podle\'s condition there exists a unique $\Lambda\colon\C(\KK)\to\sB$ such that $\alpha=(\id\tens\Lambda)\comp\bbeta$.

Denote by $m_{\C(\KK)}$ the multiplication map $\C(\KK)\atens\C(\KK)\to\C(\KK)$ and by $\mu$ the convolution multiplication $\C(\GG)\atens\C(\GG)\to\C(\GG)$.
Now let $\sS$ be the quotient of $\C(\KK)$ by the smallest (closed two sided) ideal containing the following three sets:
\[
\begin{split}
&\bigl\{(\omega\tens\id)\bigl((\mu\tens{m_{\C(\KK)}})(\id\tens\sigma\tens\id)(\bbeta(a)\tens\bbeta(b)-\bbeta(a\star{b})\bigr)\st{a,b}\in\C(\GG),\;\omega\in\C(\GG)^*\bigr\},\\
&\bigl\{(\omega\tens\id)\bigl((\bullet\tens*)\bbeta(a)-\bbeta(a^\bullet)\bigr)\st{a}\in\C(\GG),\;\omega\in\C(\GG)^*\bigr\},\\
&\bigl\{(\epsilon\tens\id)\bbeta(a)-\epsilon(a)\I\st{a}\in\C(\GG)\bigr\},
\end{split}
\]
where $\sigma$ denotes the flip map $\C(\KK)\tens\C(\GG)\to\C(\GG)\tens\C(\KK)$. Then let $\balpha=(\id\tens\pi)\comp\bbeta$, with $\pi$ the quotient map $\C(\KK)\to\sS$. Clearly $\balpha$ represents a quantum family of automorphisms of $\GG$. Moreover it obviously has the following universal property: if $\sC$ is a unital \cst-algebra and $\gamma\colon\C(\GG)\to\C(\GG)\tens\sC$ represents a quantum family of automorphisms of $\GG$ then there exists a unique $\Lambda\colon\sS\to\sC$ such that
\[
\gamma=(\id\tens\Lambda)\comp\balpha.
\]

Showing that there exists a unique $\Delta\colon\sS\to\sS\tens\sS$ such that
\begin{equation}\label{alal}
(\id\tens\Delta)\comp\balpha=(\balpha\tens\id)\comp\balpha
\end{equation}
is now quite standard: the right hand side of \eqref{alal} represents a quantum family of automorphisms of $\GG$. Therefore it is of the form given by the left hand side of \eqref{alal} for a unique $\Delta\colon\sS\to\sS\tens\sS$. Coassociativity of $\Delta$ follows from associativity of the operation of composition of quantum families (\cite[Proposition 5]{qs}).

In the same way as in the proofs of \cite[Proposition 12, Theorem 16(6), Theorem 21(6)]{qs} one shows that the quotient map $q\colon\C(\KK)\to\sS$ satisfies
\[
(q\tens{q})\comp\Delta_\KK=\Delta\comp{q}.
\]
Since $q$ is a continuous surjection, the density conditions
\[
\Delta(\sS)(\I\tens\sS)\quad\text{and}\quad(\sS\tens\I)\Delta(\sS)\quad\text{are dense in}\quad\sS\tens\sS
\]
hold and so we can define a compact quantum group $\qAut(\GG)$ by putting $\C\bigl(\qAut(\GG)\bigr)=\sS$. We have already shown that $\qAut(\GG)$ is the final object in the category $\QAUT(\GG)$.
\end{proof}

\begin{proposition}
The compact quantum groups $\qAut(\GG)$ and $\qAut(\hh{\GG})$ are (canonically) isomorphic.
\end{proposition}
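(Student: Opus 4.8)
The plan is to promote the correspondence $\alpha\mapsto\hh{\alpha}$ to an isomorphism of categories $\QAUT(\GG)\to\QAUT(\hh{\GG})$, and then to check that under this isomorphism the compact quantum group structure produced by Theorem \ref{existquantaut} on the final object of $\QAUT(\GG)$ is carried to the one on the final object of $\QAUT(\hh{\GG})$.

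First I would check that sending an object $(\sB,\alpha)$ to $(\sB,\hh{\alpha})$, and a morphism to the same underlying $*$-homomorphism, defines a functor $\QAUT(\GG)\to\QAUT(\hh{\GG})$. On objects this is precisely Corollary \ref{corquantaut}. On morphisms, if $\phi\colon\sB'\to\sB$ is a unital $*$-homomorphism with $(\id\tens\phi)\comp\alpha'=\alpha$, then Proposition \ref{alhatInne} gives
\[
(\id\tens\phi)\comp\hh{\alpha'}=(\cF\tens\id)\comp(\id\tens\phi)\comp\alpha'\comp\cF^{-1}=\hh{\alpha},
\]
so $\phi$ remains a morphism between the hatted objects. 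Since $\hh{\;\!\!\hh{\alpha}}=\alpha$ for every quantum family of automorphisms (Proposition \ref{equivaut}), the same construction applied to $\hh{\GG}$, under the canonical identification $\hh{\hh{\GG}}\cong\GG$, furnishes an inverse functor; hence the two categories are isomorphic, and in particular the image $(\sB_u,\hh{\alpha_u})$ of the final object $(\sB_u,\alpha_u)$ of $\QAUT(\GG)$ is a final object of $\QAUT(\hh{\GG})$.

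Next I would compare the coproducts. Writing $\Delta$ for the coproduct of $\qAut(\GG)$ on $\sB_u$, the action equation reads $(\id\tens\Delta)\comp\alpha_u=(\alpha_u\tens\id)\comp\alpha_u=\alpha_u\vt\alpha_u$. Two elementary identities, both immediate from Proposition \ref{alhatInne}, will be used: $\hh{(\beta\vt\gamma)}=\hh{\beta}\vt\hh{\gamma}$ for quantum families of maps $\GG\to\GG$, and $\hh{\bigl((\id\tens\phi)\comp\beta\bigr)}=(\id\tens\phi)\comp\hh{\beta}$ for a $*$-homomorphism $\phi$. Applying the passage $\alpha\mapsto\hh{\alpha}$ to the action equation of $\alpha_u$ and invoking these identities yields
\[
(\id\tens\Delta)\comp\hh{\alpha_u}=\hh{\alpha_u}\vt\hh{\alpha_u}=(\hh{\alpha_u}\tens\id)\comp\hh{\alpha_u},
\]
i.e.\ $\Delta$ exhibits $\hh{\alpha_u}$ as an action of the compact quantum group $(\sB_u,\Delta)$ on $\hh{\GG}$. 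Since $(\sB_u,\hh{\alpha_u})$ is a final object of $\QAUT(\hh{\GG})$, the uniqueness assertion in Theorem \ref{existquantaut} applied to $\hh{\GG}$ identifies $(\sB_u,\Delta)$ with $\C(\qAut(\hh{\GG}))$ carrying its canonical coproduct, which gives the desired canonical isomorphism $\qAut(\GG)\cong\qAut(\hh{\GG})$.

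All the computations involved are light, owing to the clean formula of Proposition \ref{alhatInne}; the step that needs the most care is the last one, where one must know that the compact quantum group structure on $\sB_u$ is determined by the universal action alone — this is exactly what the uniqueness clause of Theorem \ref{existquantaut} supplies, once $(\sB_u,\hh{\alpha_u})$ has been recognised as a final object for $\hh{\GG}$.
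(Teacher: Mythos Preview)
Your proof is correct and follows the same route as the paper, which simply says ``Follows from Corollary \ref{corquantaut}.'' You have supplied the details that the paper leaves implicit: that $\alpha\mapsto\hh{\alpha}$ is functorial on morphisms (via Proposition \ref{alhatInne}), that it is an isomorphism of categories (via $\hh{\;\!\!\hh{\alpha}}=\alpha$ from Proposition \ref{equivaut}), and that the coproduct on $\sB_u$ transports along this isomorphism (via the identities $\hh{(\beta\vt\gamma)}=\hh{\beta}\vt\hh{\gamma}$ and $\hh{((\id\tens\phi)\comp\beta)}=(\id\tens\phi)\comp\hh{\beta}$, together with the uniqueness clause of Theorem \ref{existquantaut}).
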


\begin{proof}
Follows from Corollary \ref{corquantaut}.
\end{proof}

Note that in view of Theorem \ref{existquantaut} and Proposition \ref{classquantaut} it is natural to ask how one can define the quantum group of all quantum \emph{inner} automorphisms of a given quantum group. We intend to address this question in later work, here mentioning only that the notion of \emph{classical} inner automorphisms of a compact quantum group was recently introduced and studied in \cite{Issan}.

\section{Quantum automorphisms of a finite group}

Let $\Gamma$ be a finite group. Denote by $\cA$ the vector space of all complex valued functions on $\Gamma$. We will use its canonical basis $\{\delta_x\}_{x\in\Gamma}$. In what follows we shall study a linear map $\alpha\colon\cA\to\cA\tens\sB$, where $\sB$ is a unital \cst-algebra. The map $\alpha$ defines a matrix $P\in{M_{|\Gamma|}}\bigl(\sB\bigr)$ by
\begin{equation}\label{actionform}
\alpha(\delta_y)=\sum_{x\in\Gamma}\delta_x\tens{p_{x,y}}.
\end{equation}

The following three propositions follow from elementary calculations. \nopagebreak

\begin{proposition} \label{unitstar}
The map $\alpha$ is a unital $*$-homomorphism for the pointwise $*$-algebra structure on $\cA$ if and only if
\begin{subequations}\label{pRel1}
\begin{align}
p_{x,y}^*&=p_{x,y},\qquad{x},y,\in\Gamma,\label{pStar}\\
p_{x,y}^2&=p_{x,y},\qquad{x},y,\in\Gamma,\label{pSquare}\\
\sum_{y}p_{x,y}&=\I,\qquad\quad\:\!{x}\in\Gamma.\nonumber
\end{align}
\end{subequations}
\end{proposition}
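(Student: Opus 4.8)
The plan is to expand the three conditions defining ``unital $*$-homomorphism for the pointwise structure on $\cA$'' one at a time, using the matrix coefficients $p_{x,y}$ from \eqref{actionform}, and simply read off the stated relations. First I would handle the $*$-preservation condition: since $\delta_y^* = \delta_y$ in $\cA$, the requirement $\alpha(\delta_y^*) = \alpha(\delta_y)^*$ becomes $\sum_x \delta_x \tens p_{x,y}^* = \sum_x \delta_x \tens p_{x,y}$, and because $\{\delta_x\}_{x\in\Gamma}$ is a basis this is equivalent to \eqref{pStar}, i.e.\ each $p_{x,y}$ is self-adjoint.

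Next I would treat multiplicativity. In $\cA$ we have $\delta_y \delta_{y'} = \delta_{y,y'}\delta_y$ (pointwise product), so $\alpha$ is multiplicative iff $\alpha(\delta_y)\alpha(\delta_{y'}) = \delta_{y,y'}\alpha(\delta_y)$ for all $y,y'$. Computing the left-hand side, $\alpha(\delta_y)\alpha(\delta_{y'}) = \sum_{x,x'} \delta_x\delta_{x'} \tens p_{x,y}p_{x',y'} = \sum_x \delta_x \tens p_{x,y}p_{x,y'}$, again using $\delta_x\delta_{x'} = \delta_{x,x'}\delta_x$. Comparing coefficients of $\delta_x$, multiplicativity is equivalent to $p_{x,y}p_{x,y'} = \delta_{y,y'}\,p_{x,y}$ for all $x,y,y'$. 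The diagonal case $y = y'$ gives exactly the idempotency relation \eqref{pSquare}; the off-diagonal case gives orthogonality $p_{x,y}p_{x,y'} = 0$ for $y \neq y'$. Here I should note that, in the presence of \eqref{pStar}, the orthogonality relations are in fact a consequence of \eqref{pSquare} together with the row-sum relation $\sum_y p_{x,y} = \I$: if the $p_{x,y}$ are self-adjoint idempotents summing to $\I$, then for $y\ne y'$ one has $p_{x,y}(\I - p_{x,y})=0$, i.e.\ $p_{x,y}\sum_{y''\ne y}p_{x,y''}=0$, and multiplying by $p_{x,y'}$ on the right and using $p_{x,y''}p_{x,y'}=\delta_{y'',y'}p_{x,y'}$ forces $p_{x,y}p_{x,y'}=0$; this is the standard fact that a sum of self-adjoint projections equalling $\I$ forces mutual orthogonality. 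This is the only point requiring a moment's thought, and it explains why the orthogonality relations do not appear explicitly in \eqref{pRel1}.

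Finally, unitality: $\alpha(\I_\cA) = \I_\cA \tens \I_\sB$, and since $\I_\cA = \sum_{y\in\Gamma}\delta_y$ we get $\alpha(\I_\cA) = \sum_y \alpha(\delta_y) = \sum_x \delta_x \tens \bigl(\sum_y p_{x,y}\bigr)$, so unitality is equivalent to $\sum_y p_{x,y} = \I$ for every $x\in\Gamma$, which is the third relation. Assembling the three equivalences---and observing, as above, that the orthogonality relations $p_{x,y}p_{x,y'}=0$ ($y\ne y'$) hidden in multiplicativity are automatic once \eqref{pStar}, \eqref{pSquare} and the row-sum relation hold---completes the proof. I expect no genuine obstacle here; the only subtlety, as indicated, is the bookkeeping showing that the minimal list \eqref{pRel1} already captures full multiplicativity.
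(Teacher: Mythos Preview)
Your approach is the natural one and matches what the paper has in mind (the paper simply says the proposition ``follows from elementary calculations'' and gives no argument), so there is nothing to compare at the level of strategy. The decomposition into $*$-preservation, multiplicativity, and unitality, and the reading-off of the three displayed relations, is exactly right.

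There is, however, a genuine circularity in the one place you flagged as ``the only subtlety.'' For the converse direction you need that \eqref{pStar}, \eqref{pSquare} and $\sum_y p_{x,y}=\I$ force the off-diagonal relations $p_{x,y}p_{x,y'}=0$ for $y\neq y'$. Your argument starts correctly from $p_{x,y}(\I-p_{x,y})=0$, i.e.\ $p_{x,y}\sum_{y''\ne y}p_{x,y''}=0$, but then you ``multiply by $p_{x,y'}$ on the right and use $p_{x,y''}p_{x,y'}=\delta_{y'',y'}p_{x,y'}$.'' That last identity \emph{is} precisely the orthogonality you are trying to establish, so the step is circular. The correct (and still elementary) argument in a \cst-algebra is: from $p_{x,y}\bigl(\sum_{y''\ne y}p_{x,y''}\bigr)p_{x,y}=0$ one gets $\sum_{y''\ne y}(p_{x,y''}p_{x,y})^*(p_{x,y''}p_{x,y})=0$, and since each summand is positive, every term vanishes, hence $p_{x,y''}p_{x,y}=0$ for all $y''\ne y$. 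With this fix your proof is complete.
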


\begin{proposition}
$\alpha$ preserves the convolution product on $\cA$ if and only if
\begin{equation}\label{auto}
p_{x,yz}=\sum_{u\in\Gamma}p_{u,y}p_{u^{-1}x,z},\qquad{x},y,z\in\Gamma.
\end{equation}
\end{proposition}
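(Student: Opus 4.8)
The plan is to compute both sides of the convolution-preservation condition in the basis $\{\delta_x\}_{x\in\Gamma}$ and read off what it says about the matrix entries $p_{x,y}$. Recall from the fundamental examples in Section~\ref{FundEx} that for $\cA=\Fun(\Gamma)$ we have $\delta_g\star\delta_h=\tfrac{1}{|\Gamma|}\delta_{gh}$, so the convolution multiplication $\mu\colon\cA\atens\cA\to\cA$ is (up to the scalar $\tfrac{1}{|\Gamma|}$) just the convolution of $\delta$-functions coming from the group law. On the target side $\cA\tens\sB$ the relevant product is $(\mu\tens m_\sB)(\id\tens\sigma\tens\id)$, where $m_\sB$ is the product of $\sB$; applied to $\alpha(\delta_y)\tens\alpha(\delta_z)=\sum_{u,v}\delta_u\tens p_{u,y}\tens\delta_v\tens p_{v,z}$ it yields $\sum_{u,v}(\delta_u\star\delta_v)\tens p_{u,y}p_{v,z}=\tfrac{1}{|\Gamma|}\sum_{u,v}\delta_{uv}\tens p_{u,y}p_{v,z}$.

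First I would substitute $v=u^{-1}x$ and reindex, rewriting this as $\tfrac{1}{|\Gamma|}\sum_{x\in\Gamma}\delta_x\tens\bigl(\sum_{u\in\Gamma}p_{u,y}p_{u^{-1}x,z}\bigr)$. On the other hand $\alpha(\delta_y\star\delta_z)=\tfrac{1}{|\Gamma|}\alpha(\delta_{yz})=\tfrac{1}{|\Gamma|}\sum_{x\in\Gamma}\delta_x\tens p_{x,yz}$. Since the $\delta_x$ are linearly independent and the scalar $\tfrac{1}{|\Gamma|}$ is common to both sides and nonzero, the condition $\alpha\comp\mu=(\mu\tens m_\sB)(\id\tens\sigma\tens\id)(\alpha\tens\alpha)$ is equivalent, evaluated on the spanning set $\{\delta_y\tens\delta_z\}$, to the system of equations $p_{x,yz}=\sum_{u\in\Gamma}p_{u,y}p_{u^{-1}x,z}$ for all $x,y,z\in\Gamma$. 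Since convolution multiplication and the product on $\cA\tens\sB$ are bilinear, checking the homomorphism property on the basis elements $\delta_y,\delta_z$ suffices, which closes the equivalence.

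There is really no serious obstacle here: the only minor point to be careful about is bookkeeping with the flip $\sigma$ and the reindexing $v\mapsto u^{-1}x$, and the observation that the common factor $\tfrac{1}{|\Gamma|}$ cancels (so it does \emph{not} appear in \eqref{auto}, despite appearing in the formula for $\delta_g\star\delta_h$)—this is exactly why the displayed relation has the clean group-theoretic form it does. I would present the computation compactly, noting that it is precisely the assertion that $P$ is a ``magic-unitary-like'' matrix implementing the group multiplication, and leave the routine verification to the reader, as the paper already signals by saying these propositions ``follow from elementary calculations.''
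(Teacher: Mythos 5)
Your computation is correct and is exactly the "elementary calculation" the paper leaves implicit: expand $\alpha(\delta_y\star\delta_z)$ and $(\mu\tens m_\sB)(\id\tens\sigma\tens\id)(\alpha(\delta_y)\tens\alpha(\delta_z))$ in the basis $\{\delta_x\}$, note the common factor $\tfrac{1}{|\Gamma|}$ cancels, and compare coefficients, with bilinearity reducing the check to basis vectors. No gap and no divergence from the paper's intended argument.
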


\begin{proposition}\label{star-map}
$\alpha$ is a $*$-map for the involution $\bullet$ on $\cA$ if and only if
\[
p_{x,y}^*=p_{x^{-1},y^{-1}},\qquad{x},y\in\Gamma.
\]
\end{proposition}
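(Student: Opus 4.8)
The plan is to unwind the definition of the involution $\bullet$ on $\cA=\Fun(\Gamma)$, compute what the condition $(\bullet\tens*)\comp\alpha=\alpha\comp\bullet$ says on the matrix coefficients $p_{x,y}$, and match. Recall from Subsection \ref{FundEx} that $\delta_g^\bullet=\delta_{g^{-1}}$; this is the only structural input needed, so the argument is purely a basis computation and one should not expect any real obstacle here.

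First I would compute the left-hand side of the identity $(\bullet\tens*)\comp\alpha=\alpha\comp\bullet$ applied to a basis vector $\delta_y$. Using \eqref{actionform} we have
\[
(\bullet\tens*)\bigl(\alpha(\delta_y)\bigr)
=(\bullet\tens*)\Bigl(\sum_{x\in\Gamma}\delta_x\tens p_{x,y}\Bigr)
=\sum_{x\in\Gamma}\delta_x^\bullet\tens p_{x,y}^*
=\sum_{x\in\Gamma}\delta_{x^{-1}}\tens p_{x,y}^*,
\]
and after the substitution $x\mapsto x^{-1}$ in the index this equals $\sum_{x\in\Gamma}\delta_x\tens p_{x^{-1},y}^*$. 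Next I would compute the right-hand side: $\alpha(\delta_y^\bullet)=\alpha(\delta_{y^{-1}})=\sum_{x\in\Gamma}\delta_x\tens p_{x,y^{-1}}$.

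Finally I would compare the two expressions coefficient by coefficient. Since $\{\delta_x\}_{x\in\Gamma}$ is a basis of $\cA$ and the tensor product decomposition is with respect to this basis, the equality of the two sums over $x$ holds if and only if $p_{x^{-1},y}^*=p_{x,y^{-1}}$ for all $x,y\in\Gamma$. Replacing $x$ by $x^{-1}$ (and noting $y$ ranges over all of $\Gamma$, so we may as well replace $y$ by $y^{-1}$ too, or leave it) yields the asserted relation $p_{x,y}^*=p_{x^{-1},y^{-1}}$ for all $x,y\in\Gamma$. Both directions of the equivalence are read off simultaneously from this coefficient comparison, so nothing further is required.
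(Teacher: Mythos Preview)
Your argument is correct and is exactly the elementary basis computation the paper has in mind (the paper itself only states that this proposition ``follow[s] from elementary calculations'' and gives no further detail). The only step worth a word of care is that $\bullet$ and $*$ are both conjugate-linear, so $\bullet\tens*$ is a well-defined conjugate-linear map on the tensor product and your termwise application is legitimate.
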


The next fact is likely well-known, but we could not locate an explicit statement in the literature (note in particular that Section 3 of \cite{Wang} lists the preservation of the counting measure by the quantum action as one of the assumptions appearing in the definition of the quantum permutation groups).

\begin{lemma}\label{countpreserved}
Let $\QG$ be a compact quantum group, let $n\in \NN$ and let $\alpha\colon\CC^n\to\CC^n\tens\C(\QG)$ be an action of $\QG$ on the algebra $\CC^n$, say given by the formulas
\[\alpha(\delta_i)=\sum_{j=1}^n\delta_j\tens{p_{j,i}},\;\; i=1,\ldots, n.\]
 Then the matrix $(p_{i,j})_{i,j=1}^n$ is a  magic unitary (i.e.~a unitary matrix, whose entries are projections); in particular projections in each row and column are mutually orthogonal.
\end{lemma}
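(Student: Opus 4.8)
The plan is to exploit the comultiplication on $\C(\QG)$ together with the fact that $\alpha$ is a unital $*$-homomorphism satisfying the action equation and Podle\'s condition. First I would record the relations that follow purely from $\alpha$ being a unital $*$-homomorphism of $\CC^n$: since the $\delta_i$ are self-adjoint orthogonal projections summing to $\I$, applying $\alpha$ gives $p_{j,i}^*=p_{j,i}$, $p_{j,i}^2=p_{j,i}$, $p_{j,i}p_{j,i'}=0$ for $i\neq i'$ (orthogonality along each row index $j$, coming from $\delta_i\delta_{i'}=\delta_{i,i'}\delta_i$), and $\sum_{j}p_{j,i}=\I$ for each $i$ (from $\alpha$ unital, reading off the $\delta_j$-coordinates). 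So each column of $(p_{i,j})$ consists of mutually orthogonal projections summing to $\I$; what is missing for a magic unitary is the analogous statement for rows, i.e.\ $\sum_i p_{j,i}=\I$ and row-orthogonality.

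The key extra input is the action equation $(\id\tens\Delta_{\QG})\comp\alpha=(\alpha\tens\id)\comp\alpha$, which on generators reads $\Delta_{\QG}(p_{j,i})=\sum_{k}p_{j,k}\tens p_{k,i}$. Then I would invoke the counit $\epsilon_{\QG}$ of the compact quantum group: applying $(\id\tens\epsilon_{\QG})$ to this identity and using $(\id\tens\epsilon_{\QG})\comp\Delta_{\QG}=\id$ yields $p_{j,i}=\sum_k p_{j,k}\,\epsilon_{\QG}(p_{k,i})$. Since $\epsilon_{\QG}(p_{k,i})$ is a scalar projection it equals $0$ or $1$, and column-orthogonality plus $\sum_k p_{k,i}=\I$ forces exactly one $k$, call it $\pi(i)$, with $\epsilon_{\QG}(p_{\pi(i),i})=1$; hence $p_{j,i}=p_{j,\pi(i)}$ for all $j$, so in fact $\epsilon_{\QG}(p_{k,i})=\delta_{k,\pi(i)}$. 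Actually a cleaner route avoiding the counit is to use Podle\'s condition directly: invertibility of the action means the linear span of $\{(\id\tens\omega)\alpha(a)\}$ is all of $\CC^n$, which by a standard argument (as in \cite{qinvert}, or as in the proof of Proposition \ref{classquantaut} where faithfulness of an invariant state is used) upgrades the column relations to the symmetric set of relations; I would present whichever of these two arguments is shorter, most likely the counit one since $\QG$ is a genuine compact quantum group with a counit on the dense Hopf $*$-subalgebra generated by the $p_{j,i}$.

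From $p_{j,i}=p_{j,\pi(i)}$ for all $j$ one sees $\pi$ must be injective: if $\pi(i)=\pi(i')$ then $p_{j,i}=p_{j,i'}$ for all $j$, but also $p_{j,i}p_{j,i'}=0$, so $p_{j,i}=0$ for all $j$, contradicting $\sum_j p_{j,i}=\I$. Hence $\pi$ is a permutation of $\{1,\dots,n\}$, and relabelling shows $(p_{j,i})$ has, in each row $j$, the property that $\sum_i p_{j,i}=\sum_i p_{j,\pi^{-1}(i)}$; combined with the column sums this gives $n\cdot\I=\sum_{i,j}p_{j,i}=\sum_j(\sum_i p_{j,i})$, and a counting/positivity argument (all $p_{j,i}\geq0$, each column sums to $\I$) pins down each row sum to be $\I$ and the row entries to be mutually orthogonal. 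Alternatively, once column-orthogonality and the relation $\Delta_{\QG}(p_{j,i})=\sum_k p_{j,k}\tens p_{k,i}$ are in hand, one checks $(p_{i,j})(p_{i,j})^*=\I=(p_{i,j})^*(p_{i,j})$ by direct computation using $p_{j,k}p_{j,k'}=\delta_{k,k'}p_{j,k}$ for one factor and the column sums for the other, which is exactly the assertion that the matrix is a magic unitary.

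The main obstacle is the passage from the one-sided relations (which come for free from $\alpha$ being a unital $*$-homomorphism) to the two-sided magic unitary relations: this genuinely uses that $\alpha$ is an \emph{action} of a compact quantum group (equivalently an invertible quantum family with the action equation), not merely a quantum family of invertible maps, and the cleanest lever is the counit of $\QG$ applied to the comultiplication formula $\Delta_{\QG}(p_{j,i})=\sum_k p_{j,k}\tens p_{k,i}$. Everything else is the routine bookkeeping of orthogonal projections summing to the identity.
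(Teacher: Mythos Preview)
There is a genuine gap: you have the easy and hard directions interchanged. From $\alpha$ being a unital $*$-homomorphism one obtains, exactly as in Proposition~\ref{unitstar}, that the $p_{j,i}$ are projections, that $p_{j,i}p_{j,i'}=0$ for $i\neq i'$ (same \emph{first} index), and from $\alpha(\I)=\I\tens\I$ that $\sum_{i}p_{j,i}=\I$ for each $j$ (sum over the \emph{second} index). In the matrix $(p_{i,j})_{i,j}$ these are the \emph{row} relations; the \emph{column} relations $\sum_{j}p_{j,i}=\I$ and $p_{j,i}p_{j',i}=0$ for $j\neq j'$ are precisely what must be proved. Your claim that ``$\sum_{j}p_{j,i}=\I$ \dots (from $\alpha$ unital, reading off the $\delta_j$-coordinates)'' is therefore incorrect, and the subsequent counit step --- ``column-orthogonality plus $\sum_k p_{k,i}=\I$ forces exactly one $k$'' --- invokes exactly the relations you have not yet established, so the argument is circular.

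Even after repairing the bookkeeping, the counit route does not close the gap by itself. Using the known row relations one gets, for each fixed $j$, a unique $\sigma(j)$ with $\epsilon_{\QG}(p_{j,k})=\delta_{k,\sigma(j)}$; feeding this back into $p_{j,i}=\sum_k p_{j,k}\,\epsilon_{\QG}(p_{k,i})$ yields $p_{j,i}=\sum_{k:\sigma(k)=i}p_{j,k}$, which does not by itself force $\sigma=\id$ nor produce the column sums. One can push further (e.g.\ via Podle\'s to show $(\id\tens\epsilon_{\QG})\comp\alpha=\id$, hence $\epsilon_{\QG}(p_{j,i})=\delta_{j,i}$, and then bring in the antipode to see that $U=(p_{j,i})$ is invertible), but you still need an extra positivity or invariance argument to conclude $\sum_j p_{j,i}=\I$; none is given in your sketch, and your ``alternatively'' paragraph again assumes column-orthogonality. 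The paper avoids all of this by a different device: it quotes \cite[Proposition~2.3]{Soltan} to obtain a \emph{faithful} $\alpha$-invariant state $\rho$ on $\CC^n$, writes $\rho$ as a strictly positive probability vector $(c_i)$, and uses the inequality $c_1\I=\sum_j c_jp_{j,1}$ together with positivity of the $p_{j,i}$ to force a block-diagonal structure, reducing by induction to the case where $\rho$ is the normalised counting measure --- which is exactly the missing column-sum relation.
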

\begin{proof}
By the computations identical to those needed in Proposition \ref{unitstar} it suffices to show that $\alpha$ must preserve the counting measure of $\CC^n$, that is
\begin{equation}\label{h-inv}
\sum_{j=1}^np_{j,i}=\I, i=1,\ldots, n.
\end{equation}
By Proposition 2.3 of \cite{Soltan}, there exists a faithful state $\rho$ on $\CC^n$ which is preserved by $\alpha$. This means that there exists a sequence $(c_i)_{i=1}^n$ of strictly positive numbers summing to $1$ such that for each $j\in \{1,\ldots,n\}$ we have $\sum_{j=1}^n c_j p_{j,i}=c_i 1$. Relabeling the elements if necessary we can assume that there exists $k\in\{1,\ldots,n\}$ such that $c_1=\cdots=c_k$ and if $l\in \{k+1,\ldots,n\}$ then $c_l> c_1$. If $k=n$ then we are done, as then $\rho$ corresponds to the normalised counting measure.
Consider then the case when $k<n$. As each $p_{i,j}$ is a projection, the equality
\[ c_1 1 = \sum_{j=1}^n c_j p_{j,1}\]
implies that $p_{l,1}=0$ if  $l>k$. Similarly $p_{l,i}=0$ for each $i\leq k$, $l>k$. This means also that
\[ 1 = \sum_{j=1}^k p_{j,i}, \;\; i\leq k.\]
But then
\[ \sum_{j=1}^k \sum_{i=1}^n p_{j,i} = k1 =\sum_{i=1}^k \sum_{j=1}^k p_{j,i} ,\]
and as we are dealing with the sums of positive operators we must actually have $p_{l,i}=0$ for $i>k$, $l\leq k$. This means that the matrix $(p_{j,i})_{i,j=1}^n$ is in fact a block-diagonal matrix which has a magic unitary as a top-left $k\times k$ block. An obvious finite induction (working in the next step with $c_{k+1}=\cdots=c_{k+l}<c_{k+l+1}$) shows that the whole matrix is a magic unitary and thus the action preserves the counting measure.
\end{proof}


The next proposition shows that if only the action of a compact quantum group $\GG$ on $\Gamma$ 
preserves the convolution product, it must be an action by automorphisms.

\begin{proposition} \label{dualact}
Assume that $\alpha$ is an action of $\GG$ on $\Gamma$ and that $\alpha$ preserves the convolution product on $\cA$. Then
\begin{align*}
p_{e,y}&=\delta_{y,e}\I,\qquad{y}\in\Gamma,\\
p_{x,e}&=\delta_{x,e}\I,\qquad{x}\in\Gamma.
\end{align*}
Further $\alpha$ is an action of $\GG$ on $\Gamma$ by quantum automorphisms.
\end{proposition}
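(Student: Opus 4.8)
The plan is to deduce the two displayed identities directly from the convolution-compatibility relation \eqref{auto} and the magic unitarity of $P$, and then assemble the remaining structural facts to invoke Definition \ref{defquantaut}. First I would substitute $x=e$ and $y=z=e$ into \eqref{auto} to get $p_{e,e}=\sum_{u}p_{u,e}p_{u^{-1},e}$, and more generally feed in well-chosen group elements. A cleaner route: put $y=z=e$ in \eqref{auto} to obtain $p_{x,e}=\sum_u p_{u,e}p_{u^{-1}x,e}$ for all $x$; in particular, summing over $x$ and using $\sum_x p_{u^{-1}x,e}=\I$ (column sums of a magic unitary are $\I$, by Lemma \ref{countpreserved}, since by Proposition \ref{unitstar} and Proposition \ref{dualact}'s hypotheses $\alpha$ is already an action on $\CC^{|\Gamma|}$) shows $\sum_x p_{x,e}=(\sum_u p_{u,e})\I=\I$, which is automatic; the real input is that for $x\neq e$ the term $p_{e,e}p_{x,e}$ appears and $p_{e,e}$ is a projection. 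I would instead exploit \eqref{auto} with $x=e$: $p_{e,yz}=\sum_u p_{u,y}p_{u^{-1},z}$. Taking $z=y^{-1}$ gives $p_{e,e}=\sum_u p_{u,y}p_{u^{-1},y^{-1}}=\I$ after summing suitably — actually $p_{e,e}=\sum_u p_{u,y}p_{u^{-1},y^{-1}}$, and since $\alpha(\eta)$-preservation is what we are after, the decisive observation is that each summand $p_{u,y}p_{u^{-1},y^{-1}}$ is a product that we can control once we know $p_{e,y}=\delta_{y,e}\I$.

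So the key step is to prove $p_{e,y}=\delta_{y,e}\I$ first. For this I would take $x=e$ in \eqref{auto}: $p_{e,yz}=\sum_{u\in\Gamma}p_{u,y}p_{u^{-1},z}$. Now use that, by magic unitarity, $\sum_u p_{u,y}=\I$ and the projections $\{p_{u,y}\}_u$ are mutually orthogonal for fixed $y$; likewise $\{p_{v,z}\}_v$ for fixed $z$. Multiply $p_{e,yz}=\sum_u p_{u,y}p_{u^{-1},z}$ on the left by $p_{e,y}$: orthogonality in the row $y$ kills all terms except $u=e$, giving $p_{e,y}p_{e,yz}=p_{e,y}p_{e,z}$. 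Symmetrically, multiplying on the right by $p_{e,z}$ gives $p_{e,yz}p_{e,z}=p_{e,y}p_{e,z}$. Setting $z=e$ in the original identity, $p_{e,y}=\sum_u p_{u,y}p_{u^{-1},e}$, and here I can use Proposition \ref{dualact}'s hypothesis again is circular — so instead set $y=e$: $p_{e,z}=\sum_u p_{u,e}p_{u^{-1},z}$, multiply on the right by $p_{e,z}$: $p_{e,z}=\sum_u p_{u,e}p_{u^{-1},z}p_{e,z}=p_{e,e}p_{e,z}$ (only $u^{-1}=e$ survives), so $p_{e,z}=p_{e,e}p_{e,z}$, i.e.\ $p_{e,e}\geq p_{e,z}$ as projections, for every $z$. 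Summing over $z$: $\I=\sum_z p_{e,z}\le |\Gamma|\,p_{e,e}$, but more usefully, $p_{e,e}\sum_z p_{e,z}=\sum_z p_{e,z}=\I$ forces $p_{e,e}\cdot\I=\I$ wait — $p_{e,e}(\sum_z p_{e,z})=\sum_z p_{e,z}$ since each $p_{e,z}\le p_{e,e}$; but $\sum_z p_{e,z}=\I$ by magic unitarity (row $e$ sums to $\I$), so $p_{e,e}=p_{e,e}\I=p_{e,e}$, giving nothing new. I will instead combine $p_{e,z}\le p_{e,e}$ with mutual orthogonality of $\{p_{e,z}\}_z$ along row $e$: distinct $p_{e,z}$ are orthogonal subprojections of $p_{e,e}$ whose sum is $\I\ge p_{e,e}$, hence actually the sum of the $z\ne e$ terms is orthogonal to $p_{e,e}$ yet dominated by it, so $p_{e,z}=0$ for $z\ne e$ and $p_{e,e}=\I$. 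That is exactly $p_{e,y}=\delta_{y,e}\I$.

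Then $p_{x,e}=\delta_{x,e}\I$ follows symmetrically, either by the transpose argument using column orthogonality applied to \eqref{auto} with $y=z=e$: $p_{x,e}=\sum_u p_{u,e}p_{u^{-1}x,e}$, multiply by $p_{x,e}$ on the right and use orthogonality to isolate a single term, concluding $p_{x,e}\le p_{e,e}=\I$ trivially but then using column-$e$ orthogonality of $\{p_{x,e}\}_x$ together with $p_{e,e}=\I$ to force $p_{x,e}=0$ for $x\ne e$; or more slickly, apply the already-proven first identity to the dual picture, or use Proposition \ref{star-map}: if $\alpha$ preserved the convolution adjoint then $p_{x,e}^*=p_{x^{-1},e}$ would transport the first identity — but we have not yet established that. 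The self-contained column argument is: from $p_{x,e}=\sum_u p_{u,e}p_{u^{-1}x,e}$ take $x=e$ to get $p_{e,e}=\sum_u p_{u,e}p_{u^{-1},e}$; multiplying the general relation on the left by $p_{e,e}$ and using $\{p_{u,e}\}_u$ orthogonal leaves $p_{e,e}p_{x,e}=p_{e,e}p_{x,e}$, unhelpful, so instead multiply on the right by $p_{x,e}$ and use orthogonality of $\{p_{u^{-1}x,e}\}_u$ (i.e.\ of the column $e$) to isolate $u=e$: $p_{x,e}p_{x,e}=p_{e,e}p_{x,e}$, so $p_{x,e}\le p_{e,e}$; now $p_{e,e}=\I$ is known, and column-$e$ orthogonality of $\{p_{x,e}\}_x$ with $\sum_x p_{x,e}=\I$ forces $p_{x,e}=0$ for $x\ne e$.

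Finally, for ``$\alpha$ is an action by quantum automorphisms'': by Definition \ref{defquantaut} and Proposition \ref{equivaut} I must check $\alpha$ preserves the convolution product (hypothesis), the Haar element $\eta=\delta_e$, and the convolution adjoint. Preservation of the Haar element is immediate from $p_{x,e}=\delta_{x,e}\I$: $\alpha(\delta_e)=\sum_x\delta_x\tens p_{x,e}=\delta_e\tens\I$. For the convolution adjoint, by Proposition \ref{star-map} I need $p_{x,y}^*=p_{x^{-1},y^{-1}}$; here I would revisit \eqref{auto}, apply the adjoint (each $p_{u,y}$ is self-adjoint since $\alpha$ is a $*$-homomorphism for the pointwise structure, Proposition \ref{unitstar}), obtaining $p_{x,yz}^*=\sum_u p_{u^{-1}x,z}p_{u,y}=\sum_v p_{v,z}p_{v^{-1}(v\cdots)}$ — reindex $v=u^{-1}x$ so $u=xv^{-1}$, giving $p_{x,yz}^*=\sum_v p_{v,z}p_{xv^{-1},y}$; comparing with \eqref{auto} applied to the triple $(x,z^{-1},y^{-1})$ in the dual group, or using that $\alpha$ is an action (the coaction identity) together with the already-known relations, one identifies this with $p_{x^{-1},(yz)^{-1}}=p_{x^{-1},z^{-1}y^{-1}}$. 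The cleanest path is to invoke Lemma \ref{countpreserved} and Corollary \ref{Haarpreserv}: an action preserving the convolution product has $\hh\alpha$ a homomorphism (Lemma \ref{dualactionalg}(1)), and the boundary relations $p_{e,y}=p_{x,e}=\delta\I$ together with magic unitarity let me verify the remaining $*$-relation by a direct index computation. The main obstacle I anticipate is precisely this last point — showing $p_{x,y}^*=p_{x^{-1},y^{-1}}$ — since it requires combining the convolution relation \eqref{auto}, its adjoint, and the coaction equation in the right order; the boundary identities proved above are exactly the extra leverage that makes the bookkeeping close, and once the convolution adjoint is preserved, Proposition \ref{equivaut} hands us the conclusion, with $\hh{\!\hh\alpha}=\alpha$ for free.
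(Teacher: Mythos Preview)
Your derivation of the two boundary identities $p_{e,y}=\delta_{y,e}\I$ and $p_{x,e}=\delta_{x,e}\I$ is correct and, once the exploratory detours are pruned, is essentially the paper's argument: both multiply \eqref{auto} by a suitable $p_{v,w}$ and use magic-unitary orthogonality to isolate a single term (the paper packages this step as the intermediate identity $p_{u,y}p_{x,yz}=p_{u,y}p_{u^{-1}x,z}$), deduce $p_{e,e}=\I$, and then read off the rest from the row and column partitions of unity.

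The genuine gap is the final step, where you need $p_{x,y}=p_{x^{-1},y^{-1}}$ (preservation of the convolution adjoint, via Proposition~\ref{star-map} and self-adjointness of the $p_{x,y}$). Your proposed routes do not close: reindexing the adjoint of \eqref{auto} only produces $p_{x,yz}=\sum_v p_{v,z}p_{xv^{-1},y}$, which is just \eqref{auto} for the opposite group and does not by itself compare with $p_{x^{-1},(yz)^{-1}}$; and invoking Corollary~\ref{Haarpreserv} is circular, since that corollary \emph{assumes} $\alpha$ already represents a quantum family of automorphisms. The missing idea is simply one more application of the very orthogonality trick you used above. Take \eqref{auto} with $x=e$ and $z=y^{-1}$: since $p_{e,e}=\I$, this reads $\I=\sum_u p_{u,y}p_{u^{-1},y^{-1}}$. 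Multiply on the left by $p_{v,y}$ and use column-$y$ orthogonality to isolate $u=v$, giving $p_{v,y}=p_{v,y}p_{v^{-1},y^{-1}}$. Swapping $(v,y)\leftrightarrow(v^{-1},y^{-1})$ gives $p_{v^{-1},y^{-1}}=p_{v^{-1},y^{-1}}p_{v,y}$; taking adjoints (all $p$'s self-adjoint) then yields $p_{v^{-1},y^{-1}}=p_{v,y}p_{v^{-1},y^{-1}}=p_{v,y}$. This is exactly how the paper finishes.
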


\begin{proof}
The assumption that $\alpha$ is an action of $\GG$ on $\Gamma$ means, via Lemma \ref{countpreserved}, that the matrix $(p_{x,y})_{x,y\in \Gamma}$ is a magic unitary.

Note that once this is known, the formula \eqref{auto} implies the following equality:
\begin{equation}\label{auto2}
p_{u,y}p_{x,yz}=p_{u,y}p_{u^{-1}x,z},\qquad{u},x,y,z\in\Gamma.
\end{equation}
Inserting $z=e$ and $x=u$ in the above equality we get $p_{u,y}p_{u,y}=p_{u,y}p_{e,e}$, i.e.~$p_{u,y}=p_{u,y}p_{e,e}$. As the projections we consider are self-adjoint, we also have $p_{u,y}=p_{e,e}p_{u,y}$. Putting $u=e$ in this equation and summing over $y$, we have $p_{e,e}=\I$. Then, since $\sum\limits_yp_{e,y}=\I= \sum\limits_yp_{y,e}$, we have 
for each $x\in\Gamma$
\[
\begin{split}
p_{e,x}&=\delta_{e,x}\I,\\
p_{x,e}&=\delta_{x,e}\I.
\end{split}
\]
The formulas above imply in particular that $\alpha$ preserves the Haar element of $\cA$, i.e.~$\delta_e$.

Return now to formula \eqref{auto2} and put $x=e$, $z=y^{-1}$. This yields $p_{u,y}p_{e,e}=p_{u,y}p_{u^{-1},y^{-1}}$, so also
\[
p_{u,y}=p_{u,y}p_{u^{-1},y^{-1}},\qquad{u},y\in\Gamma.
\]
Replacing $u$ by $u^{-1}$ and $y$ by $y^{-1}$ we see that
\[
p_{u^{-1},y^{-1}}=p_{u^{-1},y^{-1}}p_{u,y}
\]
and self-adjointness yields
\[
p_{u^{-1},y^{-1}}=p_{u,y},\qquad{u},y\in \Gamma.
\]
This shows that $\alpha$ is a unital $*$-homomorphism with respect to the convolution structure on $\cA$. Together with results of Section \ref{Secdefautomor}, it ends the proof.
\end{proof}

Recall that the counits $\epsilon$ and $\hh{\epsilon}$ and Haar measures $h$ and $\hh{h}$ of $(\cA,\Delta)$ and $(\cA,\hh{\Delta})$ respectively are given by
\[
\begin{aligned}
\epsilon(\delta_x)&=\hh{h}(\delta_x)=\delta_{x,e},\\
\hh{\epsilon}(\delta_x)&=h(\delta_x)=1,
\end{aligned}
\qquad{x}\in\Gamma.
\]

\subsection{Quantum automorphisms and order}

In this subsection we show that, as in the classical case, quantum automorphisms in a natural sense preserve the order of elements and show that certain quantum automorphism groups are classical. Recall that the \emph{order} of an element $x\in\Gamma$ is defined as $\ord(x)=\min\{n\in\NN\st{x^n=e}\}$.

\begin{proposition} \label{order}
Let $\GG$ be a compact quantum group and let $\alpha\colon\cA\to\cA\tens\C(\GG)$ given by the prescription \eqref{actionform} define an action of $\GG$ by quantum automorphism. Then the following conditions are satisfied:
\begin{enumerate}
\item\label{statement-i} $p_{x,y}=0$ if $x,y\in\Gamma$, $\ord(x)\neq\ord(y)$;
\item $p_{x,y}p_{x^n,z}=p_{x^n,z}p_{x,y}$, for $x,y,z\in\Gamma$, $n\in\NN$;
\item $p_{y,x}p_{z,x^n}=p_{z,x^n}p_{y,x}$, for $x,y,z\in\Gamma$, $n\in\NN$;
\item $p_{x^n,y^n}\geq{p_{x,y}}$ for $x,y\in\Gamma$, $n\in\NN$.
\end{enumerate}
\end{proposition}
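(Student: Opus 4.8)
The plan is to prove the fourth statement first and then deduce the other three from it together with the magic unitary structure. By Lemma \ref{countpreserved} the matrix $(p_{x,y})_{x,y\in\Gamma}$ is a magic unitary, so every $p_{x,y}$ is a self-adjoint projection and entries lying in a common row, or in a common column, are mutually orthogonal. Since $\alpha$ also preserves the convolution product we have \eqref{auto} at our disposal, hence its consequence \eqref{auto2}, and by Proposition \ref{dualact} we know $p_{e,y}=\delta_{y,e}\I$, $p_{x,e}=\delta_{x,e}\I$ and (from the proof of that proposition) $p_{e,e}=\I$.

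For the fourth statement, I would show by induction on $n$ that $p_{x,y}\,p_{x^n,y^n}=p_{x,y}$ for all $x,y\in\Gamma$; since both factors are self-adjoint projections, this is exactly the assertion $p_{x^n,y^n}\geq p_{x,y}$ (taking adjoints also yields $p_{x^n,y^n}\,p_{x,y}=p_{x,y}$). The case $n=1$ is just $p_{x,y}^2=p_{x,y}$. For the inductive step, apply \eqref{auto2} with the substitution $u\mapsto x$, $x\mapsto x^n$, $z\mapsto y^{n-1}$ (keeping $y$): since then $yz=y^n$ and $u^{-1}x=x^{n-1}$, the identity reads $p_{x,y}\,p_{x^n,y^n}=p_{x,y}\,p_{x^{n-1},y^{n-1}}$, and the induction hypothesis closes the argument. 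The only genuine idea in the whole proposition is spotting this substitution, which makes the convolution identity telescope; everything else is bookkeeping with the orthogonality relations of the magic unitary, so I do not anticipate a serious obstacle.

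The first statement is then immediate: taking $n=\ord(x)$ in the fourth statement gives $p_{e,y^n}\geq p_{x,y}$, so $p_{x,y}=p_{x,y}\,p_{e,y^n}=\delta_{y^n,e}\,p_{x,y}$; hence $p_{x,y}\neq 0$ forces $y^{\ord(x)}=e$, i.e.\ $\ord(y)\mid\ord(x)$. Taking instead $n=\ord(y)$ and using $p_{x^n,e}=\delta_{x^n,e}\I$ gives $\ord(x)\mid\ord(y)$, so $p_{x,y}\neq 0$ implies $\ord(x)=\ord(y)$.

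For the two commutation relations I would use the fourth statement in the forms $p_{x,y}=p_{x,y}\,p_{x^n,y^n}$ (for item (2)) and $p_{y,x}=p_{y,x}\,p_{y^n,x^n}$ (for item (3)), combined with orthogonality within a row, respectively within a column, of the magic unitary. Concretely, $p_{x,y}\,p_{x^n,z}=p_{x,y}\,p_{x^n,y^n}\,p_{x^n,z}$, and the product $p_{x^n,y^n}\,p_{x^n,z}$ (entries in row $x^n$) vanishes unless $z=y^n$, in which case it equals $p_{x^n,y^n}$; thus $p_{x,y}\,p_{x^n,z}=\delta_{z,y^n}\,p_{x,y}$, and taking adjoints shows $p_{x^n,z}\,p_{x,y}$ equals the same operator, so the two commute. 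The third statement is proved verbatim after interchanging the roles of rows and columns (and of $x$ and $y$), this time using that $p_{y^n,x^n}$ and $p_{z,x^n}$ sit in the same column $x^n$. All the substantive work is in the short induction of the second paragraph.
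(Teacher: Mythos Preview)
Your proof is correct and follows essentially the same route as the paper: the core step is the telescoping use of \eqref{auto2} to obtain $p_{x,y}p_{x^n,y^n}=p_{x,y}$, after which everything is bookkeeping with the magic-unitary relations. The only cosmetic difference is in organization---the paper keeps a free parameter and proves the slightly sharper identity $p_{x,y}p_{x^{k+1},y^{k}u}=\delta_{y,u}\,p_{x,y}$ directly by the same iteration, whereas you first establish the special case (statement~(4)) and then recover this sharper form as $p_{x,y}p_{x^n,z}=\delta_{z,y^n}p_{x,y}$ via row orthogonality; both arrive at the same conclusions by the same mechanism.
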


\begin{proof}
We begin by recalling the formula \eqref{auto2} satisfied by the elements $\{p_{x,y}\st{x},y\in\Gamma\}$ determining the action $\alpha$. Note first that this can be rewritten as
\begin{equation}\label{auto3}
p_{x,y}p_{z,u}=p_{x,y}p_{xz,yu},\qquad{u},x,y,z\in\Gamma.
\end{equation}
Note that the last expression is symmetric with respect to the swapping of rows and columns of the matrix $(p_{x,y})_{x,y\in\Gamma}$.

Apply it with $z=x$. This gives $p_{x,y}p_{x,u}=p_{x,y}p_{x^2,yu}$, so we obtain
\[
p_{x,y}p_{x^2,yu}=\delta_{yu}p_{x,y},\qquad{x},y,u\in\Gamma.
\]
The expression on the left can be further rewritten using \eqref{auto3}, so that we get
\[
p_{x,y}p_{x^3,y^2u}=\delta_{yu}p_{x,y},\qquad{x},y,u \in \Gamma,
\]
and further inductively
\begin{equation} \label{inductively}
p_{x,y}p_{x^{k+1},y^k u}=\delta_{yu}p_{x,y},\qquad{x},y,u\in\Gamma,\;k\in\NN.
\end{equation}
Note that by selfadjointness the same holds with the projections on the left switching sides (in particular, the two projections featuring on the left commute).

If then say $\ord(x)=l<\ord(y)$ we obtain (putting $y=u$)
\[
p_{x,y}=p_{x,y}p_{x^l,y^l}=p_{x,y}p_{e,y^l}=0.
\]
This proves statement \eqref{statement-i} in the proposition. It is not difficult to see that all the other statements are direct consequences of the remarks and formulas in the above proof.
\end{proof}

The following theorem and its proof were communicated to us by the anonymous referee (the original version contained the result valid only for cyclic groups of prime order).

\begin{theorem} \label{cyclic}
Let $\Gamma$ be a finite cyclic group. Then the quantum automorphism group of $\Gamma$ is classical.
\end{theorem}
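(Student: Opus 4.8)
The plan is to show that the universal $\cst$-algebra $\C(\qAut(\Gamma))$, generated by the entries of the magic unitary $(p_{x,y})_{x,y\in\Gamma}$ subject to the relations of Section 3, is commutative. Since $\Gamma$ is cyclic, fix a generator $g$, so $\Gamma=\{e,g,g^2,\dots,g^{N-1}\}$ with $N=|\Gamma|$. By Proposition \ref{dualact} every such action is automatically an action by quantum automorphisms, so $(p_{x,y})$ is a magic unitary satisfying \eqref{auto} (equivalently \eqref{auto2}, \eqref{auto3}) together with the symmetry $p_{x,y}=p_{x^{-1},y^{-1}}$ from Proposition \ref{star-map}. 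The central observation is that a group automorphism of a cyclic group is completely determined by where it sends the generator: classically $\mathrm{Aut}(\Gamma)\cong(\ZZ/N\ZZ)^\times$, and the column of the magic unitary indexed by $g$ should already generate the whole algebra.

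The key steps, in order. First I would use formula \eqref{auto3} with $z=x$ iterated — i.e.\ the relation \eqref{inductively} from the proof of Proposition \ref{order} — to show $p_{x,y}\, p_{x^{k},y^{k}}=p_{x,y}$ for all $k$, and more usefully that $p_{g^{a},g^{b}}$ is a function of $p_{g,g^{b a^{-1} \bmod N}}$-type data; concretely, from \eqref{auto3} one gets $p_{g,g^{b}}\,p_{g^{a},g^{c}}=p_{g,g^{b}}\,p_{g^{a+1},g^{b+c}}$, which lets one express every $p_{g^{a},g^{b}}$ as a product of entries from the single column $\{p_{g,g^{j}}\}_{j}$. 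Second, and this is the heart of the matter, I would prove that the projections in that one column, $\{p_{g,g^{j}}\st j\in(\ZZ/N\ZZ)^\times\}$ (the others vanish by Proposition \ref{order}\eqref{statement-i} since the generator has order $N$), commute with one another. For this I would exploit \eqref{auto3} in the form $p_{g,g^{b}}\,p_{z,u}=p_{g,g^{b}}\,p_{gz,g^{b}u}$ and its row/column-swapped version $p_{g,g^{b}}\,p_{z,u}=p_{g,g^{b}}\,p_{zg^{b'},ug}$-type identities together with magic-unitary orthogonality, to show that $p_{g,g^{b}}$ lies in the commutant of each $p_{g,g^{c}}$; a clean way is to verify $p_{g,g^{b}}\,p_{g,g^{c}}$ is self-adjoint by writing it as $p_{g,g^{b}}\,p_{g,g^{c}} = p_{g,g^{b}} \,p_{g^{2},g^{b+c}}\,(\text{summed suitably})$ and comparing with its adjoint via the symmetry $p_{x,y}=p_{x^{-1},y^{-1}}$. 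Third, once the generating column consists of mutually commuting projections and every $p_{x,y}$ is a product of these, the algebra $\C(\qAut(\Gamma))$ is commutative; hence $\qAut(\Gamma)=\C(X)$ for a compact space $X$, and by Proposition \ref{classquantaut} the action is given by a continuous family of genuine automorphisms of $\Gamma$, so $X$ is finite and $\qAut(\Gamma)\cong\mathrm{Aut}(\Gamma)$ as a classical group.

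I expect the main obstacle to be the commutativity of the generating column — establishing that $p_{g,g^{b}}$ and $p_{g,g^{c}}$ commute for $b\neq c$ in $(\ZZ/N\ZZ)^\times$. The relations \eqref{auto3} naturally produce one-sided products $p_{g,g^{b}}\,p_{g^{a},g^{c}}$, and turning these into a genuine commutation relation requires a careful combination of: (i) multiplying by a third projection from an orthogonal family and summing, (ii) using that in a magic unitary rows and columns give partitions of unity, and (iii) invoking self-adjointness plus the $x\mapsto x^{-1}$ symmetry to fold a one-sided identity into a two-sided one, exactly in the style of the arguments already appearing in the proofs of Proposition \ref{dualact} and the commented-out Proposition on unitality. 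A secondary subtlety is bookkeeping with the exponents modulo $N$ — in particular handling the case when $N$ is not prime so that $(\ZZ/N\ZZ)^\times$ is a proper subset of the nonzero residues — but Proposition \ref{order}\eqref{statement-i} already kills all the "wrong-order" entries, so this reduces to routine modular arithmetic once the commutation relation is in hand.
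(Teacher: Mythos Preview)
Your overall strategy --- reduce everything to the single family $\{p_{g,g^j}\}_j$ indexed by the generator and show this family already generates the algebra --- is exactly the paper's approach. Two points in your plan need correction, however.

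First, what you call the ``main obstacle'' is no obstacle at all. The elements $p_{g,g^j}$ lie in a single row of a magic unitary (with the paper's convention $\alpha(\delta_y)=\sum_x\delta_x\otimes p_{x,y}$ the first index is the row; you call it a column), so they are pairwise orthogonal projections and commute automatically. The machinery you propose for this step is unnecessary.

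Second, and this is the actual content, the other entries are not \emph{products} of row-$g$ entries but \emph{sums} of them. Your identity $p_{g,g^{b}}\,p_{g^{a},g^{c}}=p_{g,g^{b}}\,p_{g^{a+1},g^{b+c}}$ is correct; iterate it until the second factor returns to row $g$ and use orthogonality in that row (this is exactly \eqref{inductively}) to get $p_{g,g^{b}}\,p_{g^{a},g^{c}}=\delta_{ab\equiv c}\,p_{g,g^{b}}$. Now sum over $b$ using $\sum_b p_{g,g^{b}}=\I$ to obtain the paper's key formula
\[
p_{g^{a},g^{c}}=\sum_{b:\,ab\equiv c\!\!\!\pmod N}p_{g,g^{b}}.
\]
Every matrix entry therefore lies in the commutative \cst-subalgebra generated by the orthogonal family $\{p_{g,g^{b}}\}_b$, and $\C\bigl(\qAut(\Gamma)\bigr)$ is commutative. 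The paper organises the final step slightly differently --- it first quotes Proposition~\ref{order}(2) to see that each $p_{g,g^{b}}$ is central, then uses the displayed sum --- but once ``product'' is replaced by ``sum'' your argument and the paper's coincide.
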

\begin{proof}
We will use the notation of the last proposition and write $n$ for the order of $\Gamma$. It suffices to show that if $x\in \Gamma$ is a generator, $y, z, t \in \Gamma$ and $d \in \NN$ then $p_{x^d,y}$ commutes with $p_{z,t}$. To establish the latter fact one can assume that $d$ divides $n$ (otherwise one may change the generator with which we start) and so we do. Note also that if $d=1$ then the corresponding commutation follows from (2) in Proposition \ref{order} (this in fact ends the proof if $\Gamma$ is of a prime order).

Observe further that if $y$ is not of order $n/d$, then by (1) in Proposition \ref{order} we must have $p_{x,y}=0$, so we can further assume that $y=s^d$ for some $s \in \Gamma$. Consider then the element $p_{x^d, s^d}$. If $v \in \Gamma$  then by \eqref{inductively} applied to $k=d-1$, $y=v$ and $u=v^{-(d-1)}s^d$ we see that  $p_{x,v} p_{x^d, s^d}$ equals $p_{x,v}$ if $v^d=s^d$ and 0 otherwise. Summing the latter equalities over $v$ yields
\[ p_{x^d, s^d} = \sum_{v\in \Gamma, v^d=s^d}\, p_{x,v}.\]
The operators on the right hand side of the above commute with any $p_{z,t}$ by the first paragraph, so the proof is finished.

\end{proof}


\subsection{Quantum automorphisms of a dual of a finite group}

Consider now another, in a sense converse, approach to the problem  studied earlier in this section. Let $\Gamma$ be again a finite group and assume that a compact quantum group $\GG$ acts on the dual of $\Gamma$. Once again, we want to identify the weakest conditions on the action, so that in fact it induces the action of $\GG$ on $\Gamma$ itself, i.e.~to provide a dual counterpart of Proposition \ref{dualact}.
The situation turns out to be equally satisfactory as in the commutative case -- the preservation of the convolution product by a given action on $\hh{\Gamma}$ already implies this is an action by automorphisms.

\begin{theorem} \label{dualactdual}
Suppose that $\Gamma$ is a finite group, $\GG$ is a compact quantum group and let $\alpha\colon\CC[\Gamma]\to\CC[\Gamma]\tens\C(\GG)$ be an action of $\GG$ on $\hh{\Gamma}$ which is a homomorphism for the convolution product of $\CC[\Gamma]$. Then $\alpha$ is an action on $\hh{\Gamma}$ by quantum automorphisms.
\end{theorem}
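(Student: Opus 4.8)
The strategy is to mirror the commutative case (Proposition~\ref{dualact}), passing through the Fourier picture. Write $\cA=\CC[\Gamma]$ with its natural basis $\{\lambda_g\}_{g\in\Gamma}$, and recall from Subsection~\ref{FundEx} that for $\hh{\Gamma}$ we have $\lambda_g^\bullet=\lambda_g$, $\lambda_g\star\lambda_h=\delta_{g,h}\lambda_h$, $\Delta(\lambda_g)=\lambda_g\tens\lambda_g$, $\epsilon(\lambda_g)=1$, and the Haar element is $\eta=\frac{1}{|\Gamma|}\sum_{g}\lambda_g$. The map $\alpha$ is determined by a matrix $(q_{g,h})_{g,h\in\Gamma}$ over $\C(\GG)$ via $\alpha(\lambda_h)=\sum_g \lambda_g\tens q_{g,h}$. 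In this language the hypothesis that $\alpha$ is a $*$-homomorphism for the pointwise product — which comes for free, since $\alpha$ is assumed to be an action, hence a unital $*$-homomorphism for the given $\cst$-structure — reads $q_{g,h}q_{g',h}=\delta_{g,g'}q_{g,h}$, $\sum_h q_{g,h}=\I$, $q_{g,h}^*=q_{g,h}$; that is, $(q_{g,h})$ is a magic unitary (here the pointwise product on $\CC[\Gamma]$ is the one dual to the coproduct $\Delta$, so one must be careful about which product is ``pointwise''—but the upshot is the same bookkeeping as in Lemma~\ref{countpreserved}). The extra hypothesis, that $\alpha$ is a homomorphism for the convolution product $\star$, translates via $\lambda_g\star\lambda_h=\delta_{g,h}\lambda_h$ into a relation of the shape $\delta_{h,k}\,\alpha(\lambda_h)=\alpha(\lambda_h)\alpha(\lambda_k)$ in $\C(\GG)\tens\C(\GG)$-speak, i.e.\ $q_{g,h}q_{g',k}=\delta_{h,k}\delta_{g,g'}q_{g,h}$ after matching basis vectors — but this looks too strong, so more likely the correct reading involves the comultiplication-twisted product and yields instead $q_{g,h}q_{g',h'}=q_{g,h}q_{g',h'}\cdot(\text{something forcing }gg'\leftrightarrow hh')$. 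I would first nail down this relation precisely.

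The key algebraic steps, once the defining relation is in hand, should parallel the proof of Proposition~\ref{dualact} almost verbatim: (i) derive from the convolution-homomorphism relation the identity analogous to \eqref{auto2}/\eqref{auto3}, namely $q_{g,h}q_{g',h'}=q_{g,h}q_{gg',hh'}$ (or its row/column transpose); (ii) specialise indices — setting $g'=e$, $h'=e$ and summing appropriately — to force $q_{e,e}=\I$ and then $q_{e,g}=\delta_{e,g}\I$, $q_{g,e}=\delta_{g,e}\I$ for all $g$; these last two say precisely that $\alpha$ preserves the counit $\hh\epsilon$ of $\hh{\Gamma}$ (which is $\hh\epsilon(\lambda_g)=1$) and, dually, that $\alpha$ preserves the Haar element $\eta$; (iii) specialise again — the analogue of putting $x=e,z=y^{-1}$ — to obtain $q_{g,h}=q_{g,h}q_{g^{-1},h^{-1}}$, then swap $(g,h)\mapsto(g^{-1},h^{-1})$ and invoke self-adjointness to conclude $q_{g^{-1},h^{-1}}=q_{g,h}$, which by the dual of Proposition~\ref{star-map} is exactly preservation of the convolution adjoint $\bullet$. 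At that point $\alpha$ preserves the convolution product, the convolution adjoint, and the Haar element, so by Proposition~\ref{equivaut} and Definition~\ref{defquantaut} it represents a quantum family of automorphisms of $\hh{\Gamma}$; being moreover an action, it is an action of $\GG$ on $\hh{\Gamma}$ by quantum automorphisms, which is the claim. One should also note, as in Lemma~\ref{countpreserved} and Proposition~\ref{dualact}, that the action hypothesis supplies the faithful invariant state and hence the magic-unitarity of $(q_{g,h})$, which is what makes all the cancellations legitimate (passing from $q_{g,h}q_{g',h'}=\dots$ to equalities of the $q$'s uses that they are commuting projections in suitable configurations).

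The main obstacle I anticipate is bookkeeping with the two multiplicative structures on $\CC[\Gamma]$ and getting the precise index form of the ``convolution-homomorphism'' relation right: on $\CC[\Gamma]=\C(\hh\Gamma)$ the ``pointwise'' product dual to $\Delta$ is the usual group-ring product $\lambda_g\lambda_h=\lambda_{gh}$, while $\star$ is the one with $\lambda_g\star\lambda_h=\delta_{g,h}\lambda_h$ — the roles of ``multiplication'' and ``convolution'' are swapped relative to the $\Fun(\Gamma)$ case, so the statement genuinely is the ``converse'' flagged in the text, and one must resist the temptation to transcribe the $\Fun(\Gamma)$ formulas blindly. Concretely: with $\alpha$ a $*$-homomorphism for the group-ring product one gets that $(q_{g,h})$ intertwines the regular representation in the sense $\sum_k q_{g,k}q_{g',k^{-1}h} $ collapsing correctly; and then the convolution-homomorphism condition $\alpha(\lambda_g\star\lambda_h)=\alpha(\lambda_g)\star_{\text{twisted}}\alpha(\lambda_h)$ with $\lambda_g\star\lambda_h=\delta_{g,h}\lambda_h$ becomes, after applying $\id\tens\id\tens\omega$, a family of relations $q_{a,g}q_{b,h}=\delta_{g,h}\,(\text{diag term})$ that I expect to simplify — once magic-unitarity is used — to $q_{a,g}q_{b,h}=0$ unless $g=h$, together with the multiplicativity $q_{a,g}q_{b,g}=\delta_{a,b}q_{a,g}$ already known; the content then is really in how $a,b$ are constrained, and tracing that through is the one computation I would do carefully rather than wave at. Everything after the correct relation is extracted is a routine replay of Proposition~\ref{dualact}.
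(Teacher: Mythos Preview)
Your plan to transcribe Proposition~\ref{dualact} runs into a structural obstacle that is more than bookkeeping. On $\CC[\Gamma]=\C(\hh\Gamma)$ the \cst-product is the group-ring product $\lambda_g\lambda_h=\lambda_{gh}$, so the fact that $\alpha$ is a unital $*$-homomorphism yields $q_{k,hh'}=\sum_g q_{g,h}q_{g^{-1}k,h'}$ and $q_{g,h}^*=q_{g^{-1},h^{-1}}$ --- the analogues of \eqref{auto} and Proposition~\ref{star-map}, \emph{not} magic-unitarity. The convolution hypothesis, via $\lambda_g\star\lambda_h=\delta_{g,h}\lambda_h$, gives only $q_{a,g}q_{a,h}=\delta_{g,h}q_{a,g}$: idempotency and orthogonality with the \emph{first} index fixed. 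You cannot invoke Lemma~\ref{countpreserved} here, since that is a statement about actions on the commutative algebra $\CC^n$ and $\CC[\Gamma]$ is noncommutative; and without magic-unitarity the passage from the \eqref{auto}-type relation to an \eqref{auto2}-type relation fails, because the orthogonality you possess is in the wrong index to kill the cross terms in the sum. In particular neither self-adjointness of the $q_{g,h}$ (which is exactly preservation of~$\bullet$) nor $\sum_h q_{g,h}=\I$ (preservation of the Haar element~$\eta$) follows from the relations you have assembled.

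The paper supplies two ingredients your outline lacks. First, self-adjointness comes from a norm bound: writing $q_{g,h}=(h\tens\id)\bigl((\lambda_{g^{-1}}\tens\I)\alpha(\lambda_h)\bigr)$ with $h$ the Haar state and $\lambda_{g^{-1}}$, $\alpha(\lambda_h)$ unitary gives $\|q_{g,h}\|\le 1$, and a contractive idempotent in a \cst-algebra is a self-adjoint projection. Unitarity of $\alpha(\lambda_h)$ then gives the \emph{column} sums $\sum_g q_{g,h}=\I$. Second --- and this step has no parallel in Proposition~\ref{dualact} --- to obtain the row sums $\sum_h q_{g,h}=\I$ the paper transposes the matrix and defines $\beta\colon\C(\Gamma)\to\C(\Gamma)\tens\C(\GG^{\opp})$ by $\beta(\delta_x)=\sum_y\delta_y\tens q_{x,y}$, checks (using the action equation for $\alpha$ and the counit on $\Pol(\GG)$) that $\beta$ is a genuine action of $\GG^{\opp}$ on the \emph{commutative} algebra $\C(\Gamma)$, and only then applies Lemma~\ref{countpreserved} to $\beta$. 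So the commutative lemma does eventually enter, but on the other side of the Fourier transform and after the preparatory work; invoking it directly for $\CC[\Gamma]$, as you do, is the gap.
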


\begin{proof}
Let $\alpha$ as above be given by the formula
\[
\alpha(x)=\sum_{y\in\Gamma}y\tens{u_{y,x}},\qquad{x}\in\Gamma.
\]
By the well-known fact, going back to the thesis of P.\,Podle\'s (see \cite{qsIJM} for more references), concerning the decomposition of actions of compact quantum groups into isotypical components, it follows that the elements $u_{y,x}$ belong to $\Pol(\GG)$, the canonical dense Hopf $*$-subalgebra of $\C(\GG)$. Further the application of the counit $\epsilon$ of $\Pol(\GG)$ yields
\begin{equation}\label{counit}
\epsilon(u_{y,x})=\delta_{x,y},\qquad{x},y\in \Gamma.
\end{equation}
Further by the action equation we also have
\begin{equation}\label{coprod}
\Delta_{\GG}(u_{x,y})=\sum_{z\in\Gamma}u_{x,z}\tens{u_{z,y}},\qquad{x},y\in\Gamma.
\end{equation}
The fact that $\alpha$ is a homomorphism with respect to the convolution product implies that each $u_{x,y}$ is idempotent. Further, an easy norm argument implies that these elements are also contractive, and therefore self-adjoint, which means that $\alpha$ preserves the convolution adjoint. Consider then the fact that each $x\in\Gamma$ is unitary (viewed as an element of $\CC[\Gamma]$). This implies that
\[
\I\tens\I=\alpha(x^*x)=\alpha(x)^*\alpha(x)=\sum_{y,z\in\Gamma}y^*z\tens{u_{y,x}^*}u_{z,x}=\sum_{y,z\in\Gamma}y^*z\tens{u_{y,x}}u_{z,x}.
\]
In particular
\[
\sum_{y\in\Gamma}u_{y,x}=\sum_{y\in\Gamma}u_{y,x}u_{y,x}=\I.
\]
Further, as for $x, z \in \Gamma$, $x \neq z$, we have $x \star z = 0$, it follows that $u_{y,x} u_{y,z} = 0$ for each $y\in \Gamma$. Thus the matrix $(u_{x,y})_{x, y \in \Gamma}$ is a matrix of self-adjoint projections mutually orthogonal in each row and column and such that the sum of entries in each column is equal to $1$. It remains to see that the corresponding sum in each row is equal to one, as then it will follow that $\alpha$ preserves the convolution unit.  To this end introduce a new map, $\beta\colon\C(\Gamma)\to\C(\Gamma)\tens\C(\GG^{\opp})$, given by the formula:
\[
\beta(\delta_x)=\sum_{y\in\Gamma}\delta_y\tens{u_{x,y}},\qquad{x}\in\Gamma,
\]
where $\C(\GG^{\opp})$ is as a \cst-algebra isomorphic to $\C(\GG)$, but has a `tensor flipped' coproduct.
It is easy to check that $\beta$ is a unital $*$-homomorphism and it satisfies the action equation. We only verify the latter (choosing first $x \in \Gamma$):
\[
\begin{split}
(\id_{\C(\Gamma)}\tens\Delta_{\GG^{\opp}})\beta(\delta_x)&=\sum_{y\in\Gamma}\delta_y\tens\Delta_{\GG^{\opp}}(u_{x,y})
=\sum_{y\in\Gamma}\delta_y\tens\biggl(\sum_{z\in\Gamma}u_{z,y}\tens{u_{x,z}}\biggr)\\
&=\sum_{z\in\Gamma}\beta(\delta_z)\tens{u_{x,z}}=(\beta\tens\id_{\GG^{\opp}})\beta(\delta_x),
\end{split}
\]
where in the second equality we used \eqref{coprod}. It remains to notice that the identity map identifies $\Pol(\GG)$ with $\Pol(\GG^{\opp})$ and that the counit of $\Pol(\GG^{\opp})$ coincides with that of $\Pol(\GG)$. Thus \eqref{counit} implies that we have
\[ ( \id_{\C(\Gamma)} \tens \epsilon) \comp \beta = \id_{\C(\Gamma)} .\]
By Remark 2.3 of \cite{qsIJM} it follows that $\beta$ is an action of $\GG^{\opp}$ on $\C(\Gamma)$. Thus, by \cite{Wang} (we used this argument already in \eqref{h-inv}), it must preserve the uniform measure on $\Gamma$ -- and this means that  $\sum_{x\in\Gamma}u_{y,x}=\I$ and the proof is finished.
\end{proof}


\begin{thebibliography}{6666}
\bibitem[BSk] {orth} T.~Banica \& A.~Skalski: Quantum symmetry groups of \cst-algebras equipped with orthogonal filtrations. \emph{Proceedings of the LMS}
\textbf{106} (2013), no.\,5, 980--1004.
\bibitem[EnS]{EnockSchwartz}
M.~Enock \& J.-M.~Schwarz: \emph{Kac algebras and duality for locally compact groups.} Springer-Verlag 1992.
\bibitem[LaS]{LarSweed}
R.G.~Larson \& M.E.~Sweedler: An associative orthogonal bilinear form for Hopf algebras. \emph{Amer.~J. Math.} \textbf{91} (1969), 75--94.
\bibitem[Pat]{Issan}
I.~Patri: Normal subgroups, center and inner automorphisms of compact quantum groups. \emph{Int.~J.~Math.} \textbf{24} (2013), 1350071.
\bibitem[PoW]{PodW}
P.~Podle\'s \& S.L.~Woronowicz: Quantum Deformation of Lorentz Group. \emph{Comm.~Math.~Phys.} \textbf{130} (1990), 381--431.
\bibitem[SkS]{qinvert}
A.~Skalski \& P.M.~So{\l}tan: Quantum families of invertible maps. \emph{In preparation.}
\bibitem[So$_1$]{qs}
P.M.~So{\l}tan: Quantum families of maps and quantum semigroups on finite quantum spaces. \emph{J.~Geom.~Phys.} \textbf{59} (2009), 354--368.
\bibitem[So$_2$]{Soltan}
P.M.~So{\l}tan: On quantum semigroup actions on finite quantum spaces. \emph{IDAQP} \textbf{12} (2009), 503--509.
\bibitem[So$_3$]{qsIJM}
P.M.~So{\l}tan: On actions of compact quantum groups. \emph{Ill.~J.~Math.} \textbf{55} (2011), 953--962.
\bibitem[Swe]{Sweed}
M.E.~Sweedler: \emph{Hopf algebras.} Mathematics Lecture Note Series. W.A.Benjamin, Inc., New York 1969.
\bibitem[$\text{VD}_1$]{mha}
A.~Van Daele: Multiplier Hopf algebras. \emph{Trans.~Amer.~Math.~Soc.} \textbf{342} no.~2 (1994), 917--932.
\bibitem[$\text{VD}_2$]{Haarfinite}
A.~Van Daele: The Haar measure on finite quantum groups. \emph{Proc.~Amer.~Math.~Soc.} \textbf{125} (1997) no.~12, 3489--3500.
\bibitem[$\text{VD}_3$]{VDduality}
A.~Van Daele: An algebraic framework for group duality. \emph{Adv.~Math.}, \textbf{140} (1998), 323--366.
\bibitem[$\text{VD}_4$]{VDFourier}
A.~Van Daele: The Fourier transform in quantum group theory. \emph{Preprint.} Electronic version available at \texttt{http://arxiv.org/abs/math/0609502}.
\bibitem[Wan]{Wang}
S.~Wang: Quantum symmetry groups of finite spaces. \emph{Commun.~Math.~Phys.} \textbf{195} (1998), 195--211.
\bibitem[Wor]{pseudogr}
S.L.~Woronowicz: Compact matrix pseudogroups. \emph{Comm.~Math.~Phys.} \textbf{111} (1987), 613--665.
\end{thebibliography}
\end{document}